%

\RequirePackage{etoolbox}
\csdef{input@path}{{style/}{graphics/}}
\documentclass[aos,MSNbibl,seceqn,dvips]{arximspdf}
\usepackage{dcolumn}
\usepackage{algorithmicx}
\usepackage{algpseudocode}
\usepackage{algorithm}
\usepackage{graphicx}

%

\doi{10.1214/14-AOS1282}
\volume{43}
\issue{1}
\pubyear{2015}
\firstpage{422}
\lastpage{461}
\docsubty{FLA}

\makeatletter
\newcolumntype{d}[1]{D{.}{.}{#1}}

\newcommand{\lleft}{\left}
\newcommand{\rright}{\right}
\renewcommand{\mid}{|}
\newcommand{\eqref}[1]{(\ref{#1})}
\newtheorem{theorem}{Theorem}[section]
\newtheorem{proposition}[theorem]{Proposition}
\newproclaim{example}[theorem]{Example}
\newtheorem{lemma}[theorem]{Lemma}
\newtheorem{corollary}[theorem]{Corollary}
\newproclaim{definition}[theorem]{Definition}
\newproclaim{remark}[theorem]{Remark}
\newcommand{\perpp}{\perp\!\!\! \perp}
\newcommand{\PP}{\mathbb{P}}
\newcommand{\R}{\mathbb{R}}
\newcommand{\Q}{\mathbb{Q}}
\newcommand{\cone}{\operatorname{cone}}
\newcommand{\conv}{\operatorname{conv}}
\newcommand{\rank}{\operatorname{rank}}
\makeatother

\begin{document}
\begin{frontmatter}

\title{Fixed points of the EM algorithm and nonnegative rank
boundaries}
\runtitle{Fixed points of EM and nonnegative rank}

\begin{aug}
\author[A]{\fnms{Kaie}~\snm{Kubjas}\corref{}\ead[label=e1]{kaie.kubjas@gmail.com}},
\author[B]{\fnms{Elina}~\snm{Robeva}\thanksref{T2}\ead[label=e2]{erobeva@gmail.com}}
\and
\author[B]{\fnms{Bernd}~\snm{Sturmfels}\thanksref{T3}\ead[label=e3]{bernd@berkeley.edu}}
\runauthor{K. Kubjas, E. Robeva and B. Sturmfels}
\affiliation{Aalto University, University of California, Berkeley\\
and University of California, Berkeley}
\address[A]{K. Kubjas\\
Aalto Science Institute\\
Aalto University\\
PO Box 15500\\
FI-00076 Aalto\\
Finland\\
\printead{e1}}
\address[B]{E. Robeva\\
B. Sturmfels\\
Department of Mathematics\\
University of California, Berkeley\\
Berkeley, California 94720\\
USA\\
\printead{e2}\\
\phantom{E-mail: }\printead*{e3}}
\end{aug}
%
\thankstext{T2}{Supported by a UC Berkeley Graduate Fellowship.}
\thankstext{T3}{Supported by NSF Grant DMS-09-68882.}

%
\received{\smonth{12} \syear{2013}}
%
\revised{\smonth{10} \syear{2014}}

%
\begin{abstract}
Mixtures of $r$ independent distributions
for two discrete random variables can be represented by matrices of
nonnegative rank $r$.
Likelihood inference for the model of such joint distributions leads to
problems in real algebraic geometry that are
addressed here for the first time.
We characterize the set of fixed points of the
Expectation--Maximization algorithm, and we study the
boundary of the space of matrices with nonnegative rank at most~$3$.
Both of these sets correspond to algebraic varieties with many
irreducible components.
\end{abstract}

%
\begin{keyword}[class=AMS]
\kwd{62F10}
\kwd{13P25}
\end{keyword}
\begin{keyword}
\kwd{Maximum likelihood}
\kwd{EM algorithm}
\kwd{mixture model}
\kwd{nonnegative rank}
\end{keyword}
\end{frontmatter}

\section{Introduction} \label{intro}
The $r$th mixture model $\mathcal{M}$ of two discrete random variables
$X $ and $Y$
expresses the conditional independence statement
$X \perpp Y \mid Z$,
where $Z$ is a hidden (or latent) variable with $r$ states.
Assuming that $X$ and $Y$ have $m$ and $n$ states, respectively,
their joint distribution is written as an $m \times n$-matrix
of nonnegative rank $\leq r$ whose
entries sum to $1$.
This mixture model is also known as the
\textit{naive Bayes model}. Its
graphical representation is shown
in Figure~\ref{figuregraphicalmodel}.

A collection of i.i.d.~samples from a joint distribution is recorded in a
nonnegative matrix
\[
U    =    \lleft[\matrix{ u_{11} & u_{12} & \cdots&
u_{1n}
\cr
u_{21} & u_{22} & \cdots&
u_{2n}
\cr
\vdots& \vdots& \ddots& \vdots
\cr
u_{m1} &
u_{m2} & \cdots& u_{mn}} \rright]. %
\]
Here, $u_{ij}$ is the number of observations in the sample with $X = i$
and $Y=j$.
The sample size is $u_{++} = \sum_{i,j} u_{ij}$.
It is standard practice to fit the model to the data $U$
using the Expectation--Maximization (EM) algorithm. However, it has been
pointed out in the literature that EM has several issues (see the next
paragraph for details) and one has to be careful when using it.
Our goal is to better understand this algorithm by studying
its mathematical properties in some detail.

One of the main issues of Expectation--Maximization is that it does not
provide a certificate for having found the global optimum.
The geometry of the algorithm has been a topic
for debate among statisticians since the seminal paper of
Dempster, Laird and Rubin \cite{DLR}.
Murray \cite{Murray} responded with
a warning for practitioners to
be aware of the existence of multiple stationary points.
Beale \cite{Beale} also brought this up, and
Fienberg \cite{Fienberg} referred to the
possibility that the MLE lies on the boundary
of the parameter space. A recent discussion of this issue
was presented by
Zwiernik and Smith \cite{ZS}, Section~3,
in their analysis of inferential
problems arising from the semialgebraic geometry
of a latent class model.
The fact that our model fails to be identifiable
was highlighted by Fienberg et al.
in \cite{FHRZ}, Section~4.2.3. This
poses additional difficulties, and it forces
us to distinguish between the boundary of the
parameter space and the
boundary of the model.
The image of the former contains the latter.

The EM algorithm aims to maximize the log-likelihood
function of the model~$\mathcal{M}$. In doing so, it approximates
the data matrix
$U $ with a product of nonnegative matrices $ A \cdot B$
where $A$ has $r$ columns and $B$ has $r$ rows.
In Section~\ref{sec2}, we review the EM algorithm in our context.
Here, it is essentially equivalent
to the widely used method of Lee and Seung \cite{LS}
for \textit{nonnegative matrix factorization}. The
nonnegative rank of matrices has been studied
from a broad range of perspectives, including
computational geometry \cite{ABRS,CR},
topology \cite{MSS}, contingency tables \cite{BCR,FHRZ},
complexity theory \cite{Moi,Vav}
and convex optimization \cite{FH}.
We here present the approach from
algebraic statistics \cite{LiAS,ASCB}.

\begin{figure}

\includegraphics{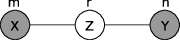}

\caption{Graphical model on two observed variables and one hidden variable.}
\label{figuregraphicalmodel}
\end{figure}

Maximum likelihood estimation for the model $\mathcal{M}$
is a nonconvex optimization problem. 
Any algorithm that promises to compute the MLE $\widehat P$
will face the following fundamental dichotomy. The
optimal matrix $\widehat P$ either
lies in the relative interior of $\mathcal{M}$
or it lies in the model boundary $\partial\mathcal{M}$.

If $\widehat P$ lies in the relative interior of $\mathcal M$, then
the situation is nice. In this case,
$\widehat P$ is a critical point for the likelihood function on the
manifold of rank $r$ matrices.
There are methods by Hauenstein et al. \cite{HRS} for finding
the MLE with certificate.
The ML degree, which they compute, bounds the number
of critical points, and hence all candidates for the global
maximizer $\widehat P$. However, things are more difficult when
$\widehat P$ lies in
the boundary $\partial\mathcal{M}$. In that case, $\widehat P$
is generally not a critical point for the likelihood function in the
manifold of rank $r$ matrices,
and none of the results on ML degrees in
\cite{LiAS,FHRZ,GR,HRS,HS} are\vspace*{1pt} applicable.
The present paper is the first to address the question of how $\widehat P$
varies when it occurs in the boundary $\partial\mathcal{M}$.
Table~\ref{tablenotcritical} underscores the significance of our approach.
As the matrix size grows, the boundary case is much
more likely to happen for randomly chosen input $U$.
The details for choosing $U$ and the simulation study that generated
Table~\ref{tablenotcritical} will be described in
Example~\ref{remtable1experiment}.

\begin{table}
\tabcolsep=0pt
\tablewidth=250pt
\caption{Percentage of data matrices whose maximum likelihood
estimate $\widehat P$ lies in the boundary $\partial\mathcal{M}$}
\label{tablenotcritical}
\begin{tabular*}{\tablewidth}{@{\extracolsep{\fill}}@{}lccccc@{}}
\hline
& \multicolumn{5}{c@{}}{\textbf{Size}}\\[-5pt]
& \multicolumn{5}{c@{}}{\hrulefill}\\
\textbf{Rank} & $\bolds{4\times4}$& $\bolds{5\times5}$ & $\bolds{6\times6}$ & $\bolds{7\times7}$& $\bolds{8\times8}$\\
\hline
3 & $4.4\%$ & 23\% & 49\% & 62\%& 85\% \\
4 & & \phantom{0}7\% &37\% &71\% & 95\%\\
5 & & & 10\%& 55\% & 96\%\\
6 & && & 20\% & 75\% \\
7 & & & & & 24\%\\
\hline
\end{tabular*}
\end{table}

We now summarize the contents of this article.
Section~\ref{sec1} furnishes an introduction to the geometry
of the mixture model $\mathcal{M}$
from Figure~\ref{figuregraphicalmodel}.
We define the
\textit{topological boundary} of $\mathcal{M}$ and the \textit{algebraic
boundary} of $\mathcal{M}$, and we explain how these
two notions of boundary differ.
Concrete numerical examples for
\mbox{$4 \times4$-}matrices of rank $3$ demonstrate
how $\widehat P$ behaves as the data $U$ vary.

In Section~\ref{sec2}, we review the EM algorithm for the model
$\mathcal{M}$,
and we identify its fixed points in the parameter space.
The main result is the characterization
of the set of fixed points in Theorem~\ref{thmEMfixedTheta}.

In Section~\ref{sec3}, we identify $\mathcal{M}$
with the set of
matrices of nonnegative rank at most $3$.
Theorem~\ref{theoremsemialgebraicdescription} gives a
quantifier-free formula
for this semialgebraic set.
The importance of finding such a formula
was already stressed in the articles \cite{ARSZ,ART}.
The resulting membership test for $\mathcal{M}$
is very fast and can be applied to matrices that contain parameters.
The proof of Theorem~\ref{theoremsemialgebraicdescription}
is based on the familiar characterization of
nonnegative rank in terms of nested polytopes \cite{ABRS,CR,Vav},
and, in particular, on work of Mond et al. \cite{MSS}
on the structure of
critical configurations in the plane (shown in Figure~\ref{figcriticalconfigurations}).

In Section~\ref{sec4}, we return to Expectation--Maximization, and we
study the system of equations that characterize the EM fixed points.
Proposition~\ref{propquivercycle}
characterizes its solutions in the interior of $\mathcal{M}$.
Even in the smallest interesting case, $m=n=4$ and $r=3$,
the variety of all EM fixed points has a huge number of irreducible components,
to be determined and interpreted in Theorem~\ref{thmem44}.

The most interesting among these are
the $288$ components that delineate the topological boundary $\partial
\mathcal{M}$
inside the simplex $\Delta_{15}$.
These are discussed in Examples~\ref{exf32} and~\ref{exex52}.
Explicit matrices that lie on these components are featured in~(\ref{eqnice44matrix}) and in
Examples~\ref{exUabmatrix},~\ref{exgreencurve} and~\ref{example01matrix}.
In Proposition~\ref{prop633}, we resolve a
problem left open in \cite{HRS,HS} concerning the
ML degree arising from $ \partial\mathcal{M}$.
The main result in Section~\ref{sec5} is Theorem~\ref{thmalgebraicboundary}
which characterizes the algebraic boundary of
$m \times n$-matrices of nonnegative rank $3$.
The commutative algebra of the irreducible components in that
boundary is the content of Theorem~\ref{conjmingens}.
Corollary~\ref{cortopoboundary} furnishes a quantifier-free
semialgebraic formula for $\partial\mathcal{M}$.

The proofs of all lemmas, propositions and corollaries
appear in Appendix~\ref{appproofs}.
A~review of basic concepts
in algebraic geometry is given in Appendix~\ref{appbasics}.
This will help the reader understand the technicalities of our main results.
Supplementary materials and software are
posted at the website
\url{http://math.berkeley.edu/\textasciitilde bernd/EM/boundaries.html}.
%
Our readers will find code in {\tt R}, {\tt Macaulay2} and {\tt Magma}
for various sampling experiments,
prime decompositions, semialgebraic formulas and likelihood equations
discussed in this paper.

The methods presented here are not limited to the matrix model
$\mathcal{M}$,
but are applicable to a wide range of statistical models for discrete
data, especially
those used in computational biology \cite{ASCB}.
Such models include phylogenetic models \cite{AMR,ART}
and hidden Markov models \cite{Cri}.
The most immediate generalization is to the $r$th mixture
model of several random variables. It consists of all distributions
corresponding
to tensors of nonnegative rank at most $r$. In other words, we replace
$m \times n$-matrices
by tensors of arbitrary format. The geometry of the case $r=2$ was
studied in
depth by Allman et al. \cite{ARSZ}.
For each of these models, there is a natural EM algorithm, with
an enormous number of stationary points.
The model itself is a complicated semialgebraic set,
and the MLE typically occurs on the boundary of that set. For binary tree
models, this was shown in \cite{ZS}, Section~3.

This article introduces tools needed to gain
a complete understanding of these EM fixed points and
model boundaries. We here study them for the graphical
model in Figure~\ref{figuregraphicalmodel}.
Already in this very simple case,
we discovered patterns that are surprisingly rich.
Thus, the present work serves as a blueprint
for future research in real algebraic geometry that underlies
statistical inference.

\section{Model geometry} \label{sec1}

We begin with a geometric introduction
of the likelihood inference problem to be studied.
Let $\Delta_{mn-1}$ denote the probability simplex of
nonnegative $m \times n$-matrices $P = [p_{ij}]$
with $p_{++} = 1$. Our
model $\mathcal{M}$
is the subset of $\Delta_{mn-1}$
consisting of all matrices of the form
%
\begin{equation}
\label{eqmixformula} P   =   A \cdot\Lambda\cdot B,
\end{equation}
where $A$ is a nonnegative $m \times r$-matrix whose
columns sum to $1$,
$\Lambda$ is a nonnegative $r \times r$ diagonal matrix
whose entries sum to $1$,
and $B$ is a nonnegative $r \times n$-matrix whose
rows sum to~$1$. The triple of parameters $(A,\Lambda,B)$
represents conditional probabilities for the graphical model in
Figure~\ref{figuregraphicalmodel}. In particular, the $k$th column
of $A$ is the conditional probability distribution of $X$ given that $Z
= k$, the $k$th row of $B$ is the conditional probability distribution
given that $Z = k$, and the diagonal of $\Lambda$ is the probability
distribution of $Z$. The parameter space in which $A, \Lambda, B$ lie
is the convex polytope
$ \Theta= (\Delta_{m-1})^r \times\Delta_{r-1} \times(\Delta_{n-1})^r$.
Our model $\mathcal{M}$ is the image of the trilinear map
%
\begin{equation}
\label{eqmapphi} \phi \dvtx    \Theta \rightarrow  \Delta_{mn-1} ,
\qquad
(A, \Lambda, B)    \mapsto     P.
\end{equation}
We seek to learn the model parameters
$(A,\Lambda,B)$ by maximizing the likelihood function
%
\begin{equation}
\label{eqlikelihood} \pmatrix{u_{++}
\cr
u} \cdot\prod
_{i=1}^m \prod_{j=1}^n
p_{ij}^{u_{ij}}
\end{equation}
over $\mathcal{M}$. This is equivalent to maximizing the
log-likelihood function
%
\begin{equation}
\label{log-likelihoodfunction} \ell_U    =    \sum_{i=1}^m
\sum_{j=1}^n u_{ij} \cdot
\operatorname{log} \Biggl(  \sum_{k=1}^r
a_{i k}\lambda_k b_{k j}  \Biggr)
\end{equation}
over $\mathcal{M}$. One
issue that comes up immediately
is that the model parameters are not identifiable:
%
\begin{equation}
\label{eqdimdim}
\qquad\operatorname{dim}(\Theta)  =   r(m+n)-r-1\quad\mbox{but}\quad
\operatorname{dim}(
\mathcal{M})  =  r(m+n)-r^2-1.
\end{equation}
The first expression is the sum of the dimensions of the simplices in
the product that defines
the parameter space $\Theta$.
The second one counts the degrees of freedom in a rank $r$ matrix of
format $m \times n$.
The typical fiber, that is, the preimage of a point in the image of
(\ref{eqmapphi}), is a semialgebraic set of
dimension $r^2-r$. This is the \textit{space of explanations}
whose topology was studied by Mond et al. in \cite{MSS}.
Likelihood inference cannot distinguish among
points in each fiber, so it is preferable to regard MLE
not as an unconstrained optimization problem in $\Theta$
but as a constrained optimization problem in $\mathcal{M}$.
The aim of this paper is to determine its constraints.

Let $\mathcal{V}$ denote the set of
real $m \times n$-matrices $P$ of rank $\leq r$ satisfying $p_{++} =
1$. This set is a variety
because it is given by the vanishing of a set of polynomials, namely,
the $(r+1)\times(r+1)$ minors of the matrix $P$ plus the linear
constraint $p_{++} = 1$.
A point $P \in\mathcal{M}$ is an \textit{interior point} of $\mathcal{M}$
if there is an open ball $U \subset\Delta_{mn-1}$ that contains
$P$ and satisfies $U \cap\mathcal{V} = U \cap\mathcal{M}$.
We call $P \in\mathcal{M}$ a \textit{boundary point} of $\mathcal{M}$
if it is not
an interior point. The set of all such points is
denoted by $\partial\mathcal{M}$ and called the \textit{topological
boundary} of $\mathcal{M}$.
In other words, $\partial\mathcal{M}$ is the boundary of $\mathcal
{M}$ inside~$\mathcal{V}$.
The variety $\mathcal V$ is the Zariski closure of the set $\mathcal
M$; see Appendix~\ref{appbasics}. In other words, the set of
polynomials that vanish on $\mathcal M$ is exactly the same as the set
of polynomials that vanish on $\mathcal V$. Our model $\mathcal M$ is a
full-dimensional subset of
the variety $\mathcal V$ and is given by a set of polynomial
inequalities inside $\mathcal V$.

Fix $U$, $r$ and $P \in\mathcal{M}$ as above. A matrix $P $ is a
nonsingular point on $ \mathcal{V}$ if and only if the
rank of $P$ is exactly $r$. In this case,
its tangent space $\mathrm{T}_P(\mathcal{V})$ has
dimension $r(m+n)-r^2-1$, which, as expected, equals $\dim(\mathcal
M)$. We call $P$
a \textit{critical point} of
the log-likelihood function $\ell_U$ if
$P \in\mathcal{M}$, $P$ is a nonsingular point for~$\mathcal V$, that is,
$\operatorname{rank} (P) = r$,
and the gradient of $\ell_U$ is orthogonal to the tangent space
$\mathrm{T}_P(\mathcal{V})$.
Thus, the critical points are the
nonnegative real solutions of the various \textit{likelihood equations}
derived in \cite{LiAS,HRS,ASCB,ZJG}
to address the MLE problem for~$\mathcal{M}$.
In other words, the critical points are the solutions obtained by using
the Lagrange multipliers method for maximizing the likelihood function
over the set $\mathcal V$.
In the language of algebraic statistics, the critical points are those
points in
$\mathcal{M}$ that are accounted for by the
\textit{ML degree} of the variety~$\mathcal{V}$.

Table~\ref{tablenotcritical} shows
that the global maximum $\widehat P$
of $\ell_U$ is often a noncritical point. This means that the MLE lies
on the topological boundary $\partial\mathcal M$. The ML degree of the
variety $\mathcal{V}$ is irrelevant for
assessing the algebraic complexity of such
$\widehat P$. Instead, we need the ML degree of the boundary,
as given in Proposition~\ref{prop633}, as well as the ML degrees for
the lower-dimensional boundary strata.

The following example illustrates the concepts we have introduced so far
and what they mean.

\begin{example} \label{exUabmatrix}
Fix $m=n = 4$ and $r= 3$.
For any integers $a \geq b \geq0$,
consider the data matrix
%
\begin{equation}
\label{eqUab} U_{a,b} = \lleft[\matrix{ a & a & b & b
\cr
a & b & a & b
\cr
b & a & b & a
\cr
b & b & a & a } \rright].
\end{equation}
Note that $\operatorname{rank}(U_{a,b} ) \leq3$.
For $a=1$ and $ b=0$, this
is the standard example \cite{CR} of a nonnegative
matrix whose nonnegative rank exceeds its rank.
Thus,\vspace*{1pt} $ \frac{1}{8}U_{1,0} $ is a probability distribution
in $\mathcal{V} \setminus\mathcal{M}$.
Within the $2$-parameter family (\ref{eqUab}),
the topological boundary $\partial\mathcal{M}$
is given by the linear equation $ b = (\sqrt{2} - 1) a$.
This follows from the computations in \cite{BCR}, Section~5, and \cite
{MSS}, Section~5.
We conclude that
%
\begin{equation}
\label{eqsqrt2}
\qquad\frac{1}{8(a+b)}U_{a,b}\qquad\mbox{lies in }
\mathcal{V} \setminus\mathcal{M}\quad\mbox{if and only if}\quad b
  <   (\sqrt{2} - 1) a.
\end{equation}

For integers $a > b \geq0$ satisfying (\ref{eqsqrt2}),
the likelihood function (\ref{eqlikelihood}) for $U_{a,b}$ has
precisely eight global maxima on our model $\mathcal{M}$.
These are the following matrices, each divided by $8(a+b)$:
\begin{eqnarray*}
&\displaystyle \lleft[\matrix{ a   &    a   &    b   &    b
\cr
v   &    w   &    t   &    u
\cr
w   &    v   &    u   &    t
\cr
s   &    s   &    r   &    r } \rright]   ,\qquad   \lleft[\matrix
{ v   &
   t   &    w   &    u
\cr
a   &    b   &    a   &    b
\cr
s   &    r   &    s   &    r
\cr
w   &    u   &    v   &    t } \rright]   ,\qquad  \lleft[\matrix{
t   &
   v   &    u   &    w
\cr
r   &    s   &    r   &    s
\cr
b   &    a   &    b   &    a
\cr
u   &    w   &    t   &    v } \rright]   ,&
\\
&\displaystyle \lleft[\matrix
{ r   &
   r   &    s   &    s
\cr
t   &    u   &    v   &    w
\cr
u   &    t   &    w   &    v
\cr
b   &    b   &    a   &    a } \rright]   ,\qquad   \lleft[\matrix
{ a   &
   v   &    w   &    s
\cr
a   &    w   &    v   &    s
\cr
b   &    t   &    u   &    r
\cr
b   &    u   &    t   &    r } \rright]   ,\qquad   \lleft[\matrix
{ v   &
   a   &    s   &    w
\cr
t   &    b   &    r   &    u
\cr
w   &    a   &    s   &    v
\cr
u   &    b   &    r   &    t } \rright]   ,&
\\
&\displaystyle \lleft[\matrix{
t   &
   r   &    b   &    u
\cr
v   &    s   &    a   &    w
\cr
u   &    r   &    b   &    t
\cr
w   &    s   &    a   &    v } \rright]   ,\qquad   \lleft[\matrix
{ r   &
   t   &    u   &    b
\cr
r   &    u   &    t   &    b
\cr
s   &    v   &    w   &    a
\cr
s   &    w   &    v   &    a } \rright] . &
\end{eqnarray*}

This claim can be verified by exact symbolic computation, or by
validated numerics
as in the proof of~\cite{HRS}, Theorem 4.4. Here, $t$ is the unique
simple real root of the cubic equation
\begin{eqnarray*}
&&     \bigl(6 a^3 + 16 a^2 b + 14 a
b^2 + 4 b^3\bigr) t^3   - \bigl(20
a^4 + 44 a^3 b + 8 a b^3 + 32 a^2
b^2\bigr) t^2
\\
&&\quad{} +  \bigl(22 a^5 + 43 a^4 b + 30 a^3
b^2 + 7 a^2 b^3\bigr) t   -     \bigl(8
a^6 + 16 a^5 b + 10 a^4 b^2 + 2
a^3 b^3\bigr)
\\
&&\qquad =     0 .
\end{eqnarray*}
To fill in the other entries of these nonnegative rank $3$ matrices, we
use the rational formulas
\begin{eqnarray*}
s  &=&  \frac{(a + b) t - a^2}{a},\qquad
u  = \frac{t b}{a},
\\
w  &=&  - \frac{t (3 a^2 + 5 a b + 2 b^2) t-4 a^3-5 a^2 b-2 a b^2}{2
a^3 + a^2 b},
\\
r &=& \frac{ 2 a^2 + a b-(a + b) t}{a},
\\
v &=& \frac{(3
a^2 + 5 a b + 2 b^2) t^2 - (6 a^3+8 a^2 b+3 a
b^2) t + 6 a^3 b + 2 a^2 b^2
+ 4 a^4}{2 a^3 + a^2 b}.
\end{eqnarray*}
These formulas represent an exact algebraic solution
to the MLE problem in this case.
They describe the multivalued map
$(a,b) \mapsto\widehat P_{a,b}$
from the data to the eight maximum likelihood estimates. This allows us to
understand exactly how these solutions behave as the matrix entries $a$
and $b$ vary.

The key point is that the eight global maxima
lie in the model boundary $\partial\mathcal{M}$.
They are not critical points of $\ell_U$ on the rank $3$ variety
$\mathcal{V}$.
They will not be found by the methods in
\cite{HRS,ASCB,ZJG}. Instead, we used results about the algebraic boundary
in Section~\ref{sec4} to derive the eight solutions.

We note that this example can be seen as an extension of
\cite{HRS}, Theorem 4.4,
which offers a similar parametric analysis for the data set of the
``100 Swiss Francs Problem'' studied in \cite{FHRZ,ZJG}.
\end{example}

We now introduce the concept of algebraic boundary.
Recall that the topological boundary
$\partial\mathcal{M}$ of the model $\mathcal{M}$ is a semialgebraic subset
inside the probability simplex $\Delta_{mn-1}$. Its dimension is
\[
\operatorname{dim}(\partial\mathcal{M})   =    \operatorname{dim}(\mathcal{M})-1
=
rm+rn-r^2-2.
\]

Any quantifier-free semialgebraic description of $\partial\mathcal
{M}$ will be
a complicated Boolean combination of
polynomial equations and polynomial inequalities.
This can be seen for $r = 3$ in Corollary~\ref{cortopoboundary}.

To simplify the situation,
it is advantageous to relax the inequalities and
keep only the equations. This replaces the
topological boundary of $ \mathcal{M}$
by a much simpler object, namely the algebraic boundary of $\mathcal{M}$.
To be precise, we define the \textit{algebraic boundary} to\vspace*{1pt} be the
Zariski closure $\overline{ \partial\mathcal{M}}$
of the topological boundary $ \partial\mathcal{M}$.
Thus, $\overline{\partial\mathcal{M}}$ is a subvariety of
codimension $1$
inside the variety $\mathcal{V} \subset\PP^{mn-1}$.
Theorem~\ref{thmalgebraicboundary} will show us that
$\overline{\partial\mathcal{M}}$ can have many irreducible components.

The following two-dimensional family of matrices illustrates
the results to be achieved in this paper. These enable us to
discriminate between the topological boundary $\partial\mathcal{M}$
and the algebraic boundary $\overline{\partial\mathcal{M}}$,
and to understand how these boundaries sit inside the variety $\mathcal{V}$.

\begin{example} \label{exgreencurve}
Consider the following $2$-parameter family of $4 \times4$-matrices:
\[
P(x,y) = \lleft[\matrix{ 51 & 9 & 64 & 9
\cr
27 & 63 & 8 & 8
\cr
3 & 34 & 40 & 31
\cr
30 & 25 & 80 & 35 } \rright]  +    x \cdot\lleft[\matrix{ 1 & 1 & 3
& 0
\cr
1 & 0 & 1 & 0
\cr
0 & 1 & 0 & 1
\cr
0 & 0 & 1 & 1 } \rright]  +    y \cdot\lleft[\matrix{ 5 & 4 & 1 &1
\cr
5 & 1 & 5 & 1
\cr
1 & 5 &1 & 5
\cr
1 & 1 & 5 & 5 } \rright]. %
\]
This was chosen so that $P(0,0)$ lies in a unique component of the
topological boundary
$\partial\mathcal{M}$. The equation $\operatorname{det}(P(x,y)) = 0$
defines a plane curve $\mathcal{C}$ of degree~$4$.
This is the thin black curve shown in Figure~\ref{figgreencurve}.
In our family,
this quartic curve $\mathcal{C}$ represents the
Zariski closure $\mathcal{V}$ of the model $\mathcal{M}$.

\begin{figure}

\includegraphics{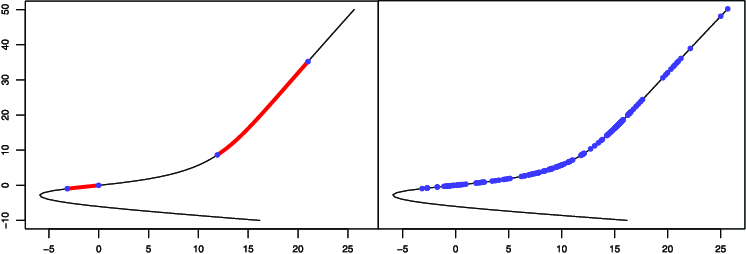}

\caption{In a two-dimensional family of $4 \times4$-matrices,
the matrices of rank $3$ form a quartic curve.
The mixture model, shown in red, has two connected components.
Its topological boundary consists of four points (on the left).
The algebraic
boundary includes many more points (on the right). Currently, there is
no known way to obtain the four points on the topological boundary (in
the left picture) without first considering all points on the algebraic
boundary (in the right picture).}
\label{figgreencurve}
\end{figure}

The algebraic boundary $\overline{\partial\mathcal{M}}$
is the variety described in Example~\ref{exex52}.
The quartic curve $\mathcal{C}$ meets
$\overline{\partial\mathcal{M}}$ in $1618$ real points $(x,y)$.
Of these $1618$ points, precisely $188$ satisfy
the constraint $P(x,y) \geq0$.
These $188$ points are the landmarks for our analysis.
They are shown in blue on the right in Figure~\ref{figgreencurve}.
In addition, we mark
the unique point where the curve $\mathcal{C}$ intersects
the boundary
polygon defined by $P(x,y) \geq0$.
This is the leftmost point, defined by $ \{\operatorname{det}(P(x,y)) = x+5y
+8= 0\}$. It equals
%
\begin{equation}
\label{eqleftendpoint} (-3.161429, -0.967714).
\end{equation}
We examined the $187$ arcs on $\mathcal{C}$ between consecutive points of
$\overline{\partial\mathcal{M}}$ as well as the two arcs at the~ends.
For each arc we checked whether it lies in
$\mathcal{M}$. This was done by a combination of
the EM algorithm in Section~\ref{sec2} and
Theorem~\ref{theoremsemialgebraicdescription}.
Precisely $96$ of the $189$ arcs were found to lie in $\mathcal{M}$.
These form two connected components on the curve $\mathcal{C}$,
namely $19$ arcs between (\ref{eqleftendpoint}) and $(0,0)$,
and
%
\begin{eqnarray}\label{eqnicearc}
&& \mbox{$76$ arcs between}
\nonumber\\[-10pt]\\[-10pt]
&&\qquad (11.905773, 8.642630)
\quad
\mbox{and} \quad(21.001324, 35.202110).\nonumber
\end{eqnarray}
These four points represent the topological boundary $\partial\mathcal{M}$.
We conclude that, in the $2$-dimensional family $P(x,y)$, the model
$\mathcal{M}$
is the union of the two red arcs shown on the left in Figure~\ref
{figgreencurve}.

Our theory of EM fixed points distinguishes between the
(relatively open) red arcs and their blue boundary points.
For the MLE problem, the red points are critical
while the blue points are not critical.
By Table~\ref{tablenotcritical}, the MLE is more likely to be
blue than red, for larger values of $m$ and $n$.
\end{example}

This example demonstrates that the
algebraic methods of Sections~\ref{sec3},~\ref{sec4} and~\ref{sec5} are
indispensable when one desires a reliable analysis of model geometries,
such as that illustrated in Figure~\ref{figgreencurve}.
To apply a method for finding the critical points of a function, for
example, Lagrange multipliers, the domain of the function needs to be
given by equality constraints only. But using only these constraints,
one cannot detect the maxima lying on the topological boundary.
For finding the critical points of the likelihood function on the
topological boundary by using the same methods, one needs to relax the
inequality constraints and consider only the equations defining the
topological boundary. Therefore, one needs to find the critical points
on the algebraic boundary $\overline{\partial\mathcal{M}}$
of the model.

\section{Fixed points of Expectation--Maximization}\label{sec2}

The EM algorithm is an iterative method for finding
local maxima of the likelihood function
(\ref{eqlikelihood}). It can be viewed
as a discrete dynamical system on
the polytope
$ \Theta= (\Delta_{m-1})^r \times\Delta_{r-1} \times(\Delta_{n-1})^r$.
Algorithm \ref{alg1} presents the version in \cite{ASCB}, Section~1.3.

\begin{algorithm}[h!]
\caption{Function EM($U,r$)}\label{alg1}
\begin{algorithmic}
\State Select random $a_1, a_2, \ldots, a_r\in\Delta_{m-1}$,
random $\lambda\in\Delta_{r-1}$, and
random $b_1,b_2,\ldots,b_r\in\Delta_{n-1}$. \\
Run the following steps until the entries of the $m \times n$-matrix
$P$ converge.
\State\textit{\textbf{E-step}: Estimate the $m \times r {\times
} n$-table
that represents this expected hidden data}:

Set $v_{i k j} := \frac{a_{ik}\lambda_kb_{k j}}{\sum_{l=1}^r
a_{il}\lambda_lb_{lj}}u_{i j}$
for $ i=1,\ldots,m$, $k=1,\ldots,r$ and $j=1,\ldots,n$.
\State\textit{\textbf{M-step}: Maximize the likelihood function
of the model}
\includegraphics{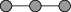} for the hidden data:

Set $\lambda_k := \sum_{i=1}^m \sum_{j=1}^n v_{i k j}/u_{++}$
for $k=1,\ldots,r$.

Set $a_{ik} := (\sum_{j=1}^n v_{i k j})/(u_{++}\lambda_k)$ for
$k=1,\ldots,r$ and $i=1,\ldots,m$.

Set $b_{k j} := (\sum_{i=1}^m v_{i k j})/(u_{++}\lambda_k)$
for $k=1,\ldots,r$ and $j=1,\ldots,n$.

\State\textit{\textbf{Update} the estimate of the joint
distribution for our mixture model} \includegraphics{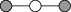}:

Set $p_{i j} := \sum_{k=1}^r a_{ik}\lambda_k b_{k j}$
for $ i=1,\ldots,m$ and $j = 1,\ldots, n$.

\State Return $P$.
\end{algorithmic}
\end{algorithm}

The alternating sequence of E-steps and M-steps defines
trajectories in the parameter polytope $\Theta$.
The log-likelihood function (\ref{log-likelihoodfunction})
is nondecreasing along each trajectory (cf.~\cite{ASCB}, Theorem 1.15).
In fact, the value can stay the same only
at a fixed point of the EM algorithm.
See Dempster et al.~\cite{DLR} for
the general version of EM and
its increasing behavior and convergence.

\begin{definition}
An \textit{EM fixed point} for a given table $U$ is
any point $(A, \Lambda, B)$
in the polytope
$\Theta= (\Delta_{m-1})^r\times\Delta_{r-1}\times(\Delta_{n-1})^r $
to which the EM algorithm can converge if it is
applied to $(U,r)$.
\end{definition}

Every global maximum $\widehat P$ of $\ell_U$ is among the
EM fixed points.
One hopes that $\widehat P$
has a large basin of attraction, and that
the initial parameter choice $(A,\Lambda,B)$ gives a
trajectory that converges to $\widehat P$. However, this need not be
the case,
since the EM dynamics on $\Theta$ has many fixed points
other than $\widehat P$. Our aim is to understand all of these.

\begin{example}\label{example01matrix}
The following data matrix is
obtained by setting $a=1,b=0$ in Example~\ref{exUabmatrix}:
\[
U  =  \lleft[\matrix{ 1 & 1 & 0 & 0
\cr
1 & 0 & 1 & 0
\cr
0 & 1 & 0 & 1
\cr
0 & 0 & 1 & 1 } \rright].
\]
Among the EM fixed points for this choice of $U$ with $r=3$ we find
the probability distributions
\begin{eqnarray*}
 P_1  &=&   \frac{1}{24} \lleft[
\matrix{ 3 & 3 & 0 & 0
\cr
2 & 0 & 4 & 0
\cr
0 & 2 & 0 & 4
\cr
1 & 1 & 2 & 2 } \rright] , \qquad P_2   =   \frac{1}{16}
\lleft[\matrix{ 2 & 2 & 0 & 0
\cr
2 & 0 & 2 & 0
\cr
0 & 1 & 1 & 2
\cr
0 & 1 & 1 & 2 } \rright]\quad\mbox{and}
\\[3pt]
P_3  &=&
\frac{1}{48} \lleft[\matrix{ 4 & 8 & 0 & 0
\cr
3 & 0 & 4 & 5
\cr
5 & 4 & 0 & 3
\cr
0 & 0 & 8 & 4 } \rright],
\end{eqnarray*}
and their orbits under the symmetry group of $U$.
For instance, the orbit of $P_1$ is obtained by setting
$s = \frac{1}{3}, r = \frac{2}{3}, v = \frac{2}{3}, t = \frac
{4}{3}, w = u = 0$
in the eight matrices in Example~\ref{exUabmatrix}.
Over $98\%$ of our runs with random starting points in $\Theta$
converged to one of these eight global maximizers of $\ell_U$.
Matrices in the orbits of $P_2$, respectively, $P_3$
were approached only rarely (less than $2 \%$) by the EM algorithm.
\end{example}

\begin{lemma}\label{lemmaEMfixed}
The following are equivalent
for a point $(A,\Lambda,B)$ in the parameter polytope~$\Theta$:
\begin{longlist}[(3)]
\item[(1)] The point $(A, \Lambda, B)$ is an EM fixed point.
\item[(2)] If we start EM with $(A, \Lambda, B)$
instead of a random point, then
EM converges to $(A, \Lambda, B)$.
\item[(3)] The point $(A, \Lambda, B)$ remains fixed after one
completion of the E-step and the M-step.
\end{longlist}
\end{lemma}



It is often believed (and actually stated in \cite{ASCB}, Theorem 1.5)
that every EM fixed point is a critical point of the log-likelihood
function $\ell_U$.
This statement is not true for the definition of ``critical''
given in Section~\ref{sec1}. In fact, for many instances~$U$,
the global maximum $\widehat P$ is not critical.

To underscore this important point and its statistical relevance,
we tested the EM algorithm
on random data matrices $U$
for a range of models with $m=n$.
The following example explains
Table~\ref{tablenotcritical}.

\begin{example}
\label{remtable1experiment}
In our first simulation, we generated random matrices $U$
from the uniform distribution on $\Delta_{mn-1}$ by using \texttt{R}
and then scaling to get integer entries. For each matrix $U$, we
ran the EM algorithm $2000$ times
to ensure convergence with high probability to the global maximum
$\widehat P$
on $\mathcal{M}$. Each run had $2000$ steps. We then checked whether
$\widehat P$ is a critical point of $\ell_U$
using the rank criterion in \cite{HRS}, equation~(2.3).
Our results are reported in Table~\ref{tablenotcritical}.
The main finding is that,
with high probability as the matrix size increases, the MLE $\widehat P$
lands on the topological boundary $\partial\mathcal{M}$, and it
fails to be critical.

In a second simulation, we started with matrices $A\in\mathbb
N^{m\times r}$ and
$B\in\mathbb N^{r\times n}$ whose entries were sampled uniformly from
$\{0,1,\ldots,100\}$.
We then fixed $P \in\mathcal{M}$ to be the $m\times n$ probability
matrix given by
$AB$ divided by the sum of its entries.
We finally took $T m n$ samples from the distribution $P$ and recorded
the results
in an $m \times n$ data matrix $U$. Thereafter, we applied EM to $U$.
We observed the following.
If $T \geq20$ then the fraction of times the MLE lies in $\partial
\mathcal M$
is very close to $0$. When $T \leq10$ though, this fraction was higher
than the
results reported in Table~\ref{tablenotcritical}. For $T = 10$ and $m=n=4$, $r=3$, this
fraction was
$13$\%, for $m=n=5, r=3$, it was $23$\%, and for $m=n=5, r=4$, it was
$17$\%.
Therefore, based on these experiments,
in order to have the MLE be a critical point in $\mathcal M$, one
should have at least 20 times more samples than entries of the matrix.
\end{example}

This brings our attention to the
problem of identifying the fixed points of EM. If we could compute all
EM fixed points,
then this would reveal
the global maximizer of $\ell_U$. Since a point is EM fixed if and
only if it stays fixed after an E-step and an M-step,
we can write rational function equations for the EM fixed points
in~$\Theta$:
\begin{eqnarray*}
\lambda_k  &=&  \frac{1}{u_{++}} \sum
_{i=1}^m\sum_{j=1}^n
\frac
{a_{ik}\lambda_k b_{kj}}{\sum_{l=1}^ra_{il}\lambda_l b_{lj}}u_{ij}
\qquad\mbox{for all } k,
\\
a_{ik}  &=& \frac{1}{\lambda_k u_{++}}\sum_{j=1}^n
\frac
{a_{ik}\lambda_k b_{kj}}{\sum_{l=1}^ra_{il}\lambda_l b_{lj}}u_{ij}
\qquad\mbox{for all } i,k,
\\
b_{kj}  &=&  \frac{1}{\lambda_k u_{++}}\sum_{i=1}^m
\frac{a_{ik}\lambda_k b_{kj}}{\sum_{l=1}^ra_{il}\lambda_l b_{lj}}u_{ij}
\qquad\mbox{for all }k, j .
\end{eqnarray*}
Our goal is to understand the solutions to these equations for a fixed
positive matrix~$U$.
We seek to find
the variety they define in the polytope $\Theta$
and
the image of that variety in~$\mathcal{M} $.

In the EM algorithm, we usually start with parameters $a_{ik}, \lambda
_k, b_{kj}$
that are strictly positive. The $a_{ik}$ or $b_{kl}$ may become
zero in the limit, but the parameters $\lambda_k$ always
remain positive when the $u_{ij}$ are positive since the entries of
each column of $A$ and each row of $B$ sum to $1$. This justifies that
we cancel out the factors $\lambda_k$ in our equations.
After this, the first equation is implied by the other two.
Therefore, the set of all EM fixed points is a variety, and it is
characterized by
\begin{eqnarray*}
a_{ik}  &=&  \frac{1}{u_{++}}\sum_{j=1}^n
\frac{a_{ik}b_{kj}}{\sum_{l=1}^ra_{il}\lambda_l b_{lj}}u_{ij} \qquad
\mbox{for all } i,k ,
\\
b_{kj}  &=& \frac{1}{u_{++}}\sum_{i=1}^m
\frac{a_{ik}b_{kj}}{\sum_{l=1}^ra_{il}\lambda_l b_{lj}}u_{ij} \qquad
\mbox{for all }k, j.
\end{eqnarray*}
Suppose that a denominator $\sum_l a_{il}\lambda_lb_{l j} $ is zero
at a point in $\Theta$. Then
$a_{ik}b_{kj} = 0$ for all $k$, and
the expression $\frac{a_{ik}b_{kj}}{\sum_{l=1}^ra_{il}\lambda_l b_{lj}}$
would be considered $0$. Using the identity
$  p_{ij}  =   \sum_{l=1}^ra_{il}\lambda_l b_{lj}$,
we can rewrite our two fixed point equations in the form
%
\begin{eqnarray}\label{eqwithdenom}
a_{ik} \Biggl(\sum_{j=1}^n \biggl( u_{++} - \frac{u_{ij}}{p_{i j}} \biggr) b_{kj} \Biggr) &=&
0\qquad\mbox{for all }k, i  \quad\mbox{and}
\nonumber\\[-8pt]\\[-8pt]\nonumber
b_{kj} \Biggl(\sum_{i=1}^m \biggl( u_{++} -
\frac{u_{ij}}{p_{i j}} \biggr) a_{ik} \Biggr) &=& 0\qquad \mbox{for all }k, j.
\end{eqnarray}
Let\vspace*{1pt} $R$ denote the $m\times n$ matrix with entries $r_{i j}= u_{++} -
\frac{u_{ij}}{p_{i j}} $.
The matrix $R$ is the gradient of the log-likelihood function
$ \ell_U(P)$, as seen in~\cite{HRS}, equation~(3.1). With this, our
fixed point equations are
%
\begin{eqnarray}\label{EMfixed}
a_{ik} \Biggl(\sum_{j=1}^n
r_{i j}b_{kj} \Biggr)  &=&  0    \qquad\mbox{for all }k, i
\quad\mbox{and}
\nonumber\\[-8pt]\\[-8pt]\nonumber
b_{kj} \Biggl(\sum
_{i=1}^m r_{i j}a_{ik} \Biggr)
&=&  0     \qquad\mbox{for all }k, j.
\end{eqnarray}
We summarize our discussion in the following theorem,
with (\ref{EMfixed}) rewritten in matrix form.

\begin{theorem} \label{thmEMfixedTheta}
The variety of EM fixed points in the polytope $\Theta$ is defined by
the equations
%
\begin{equation}
\label{EMfixed2} A\star\bigl(R\cdot B^T\bigr) = 0,\qquad B\star
\bigl(A^T\cdot R\bigr) = 0,
\end{equation}
where $R$ is the gradient matrix of the log-likelihood function
and $\star$ denotes the Hadamard product.
The subset of EM fixed points that are critical points is defined by
$R \cdot B^T = 0$ and $A^T \cdot R = 0$.
\end{theorem}

\begin{pf}
Since (\ref{EMfixed2}) is equivalent to (\ref{EMfixed}),
the first sentence is proved by the derivation above.
For the second sentence, we
consider the normal space
of the variety $\mathcal{V}$
at a rank $r$ matrix $P = A \Lambda B$.
This is the orthogonal complement of the~tangent space
$\mathrm{T}_P(\mathcal{V})$. The normal space can be expressed as
the kernel of the linear map $Q \mapsto(Q \cdot B^T, A^T \cdot Q)$.
Hence, $R = \operatorname{grad}_P (\ell_U)$ is perpendicular to
$\mathrm{T}_P(\mathcal{V})$ if and only if
$R \cdot B^T=0$ and $A^T \cdot R= 0$.
Therefore, the polynomial equations
(\ref{EMfixed2}) define the Zariski closure of the set of
parameters for which $P$ is critical.
\end{pf}

The variety defined by (\ref{EMfixed2}) is reducible.
In Section~\ref{sec4},
we shall present a detailed study of its irreducible components,
along with a discussion of their statistical interpretation.
As a preview, we here decompose the variety of
EM fixed points in the simplest possible case.

\begin{example}
\label{exEM22}
Let $m = n = 2$, $r = 1$, and consider the ideal generated
by the cubics in (\ref{EMfixed2}):
\begin{eqnarray*}
\mathcal{F}    &=&     \bigl\langle a_{11} (r_{11}
b_{11} + r_{12} b_{12}),  a_{21}
(r_{21} b_{11} + r_{22} b_{12}),
\\
&&\hspace*{6pt}{} b_{11} (a_{11} r_{11} + a_{21}
r_{21}),  b_{12} (a_{11} r_{12} +
a_{21} r_{22}) \bigr\rangle. %
\end{eqnarray*}
The software \texttt{Macaulay2} \cite{M2} computes a primary decomposition
into $12$ components:
%
\begin{eqnarray}
\label{primarydecompositionmix22} %
\mathcal{F}  &=  & \langle r_{11}r_{22}
- r_{12}r_{21}, a_{11} r_{11}+a_{21}r_{21},
a_{11}r_{12}+a_{21}r_{22},b_{11}r_{11}\nonumber
\\
&&\hspace*{135pt}
{}+b_{12}r_{12},
b_{11}r_{21}+b_{12}r_{22}  \rangle\nonumber
\\
&&{} \cap \langle a_{11} , r_{21},r_{22}\rangle \cap
  \langle a_{21} ,r_{11},r_{12} \rangle \cap
\langle r_{12},r_{22},b_{11}\rangle \cap \langle
r_{11},r_{21},b_{12} \rangle
\\
&&{} \cap \langle a_{11},r_{22},b_{11}\rangle \cap
\langle a_{11}, r_{21},b_{12} \rangle \cap
\langle a_{21},r_{12},b_{11} \rangle \cap \langle
a_{21},r_{11},b_{12}\rangle\nonumber
\\
&&{} \cap  \langle a_{11},a_{21}\rangle \cap  \langle
b_{11},b_{12}\rangle   \cap   \bigl( \langle
a_{11},a_{21} \rangle^2 + \langle
b_{11}, b_{12} \rangle^2 + \mathcal{F} \bigr).\nonumber
\end{eqnarray}
The last primary ideal is embedded.
Thus, $\mathcal{F}$ is not a radical ideal. Its radical
requires an extra generator of degree $5$.
The first $11$ ideals in (\ref{primarydecompositionmix22}) are the
minimal primes
of~$\mathcal{F}$. These give the irreducible components of the variety
$V(\mathcal{F})$.
The first ideal represents the critical points in $\mathcal{M}$.
\end{example}

\section{Matrices of nonnegative rank three}\label{sec3}

While the EM algorithm operates in the polytope $\Theta$
of model parameters $(A,\Lambda,B)$, the mixture model $\mathcal{M}$ lives
in the simplex $\Delta_{mn-1} \subset\mathbb{R}^{m \times n}$
of all joint distributions. The parame\-trization $\phi$ is not identifiable.
The topology of its fibers was studied by Mond et al.~\cite{MSS},
with focus on the first nontrivial case,
when the rank $r$ is three.
We build on their work to derive a semialgebraic
characterization of $\mathcal{M}$. This section is self-contained. It
can be
read independently from our earlier discussion of the EM algorithm. It
is aimed at
all readers interested in nonnegative matrix factorization,
regardless of its statistical relevance.

We now fix $r = 3$. Let
$A$ be a real $m \times3$-matrix with rows
$a_1,\ldots,a_m$, and
$B$ a~real $3 \times n$-matrix with columns
$b_1,\ldots,b_n$.
The vectors $b_j \in\R^3$ represent points
in the projective plane $\mathbb{P}^2$.
We view the $a_i$ as elements
in the dual space $(\R^3)^*$. These represent lines in $\mathbb{P}^2$.
Geometric algebra (a.k.a.~Grassmann--Cayley algebra~\cite{Whi})
furnishes two bilinear operations,
\[
\vee\dvtx  \R^3 \times\R^3 \rightarrow\bigl(\R^3
\bigr)^* \quad\mbox{and} \quad\wedge\dvtx  \bigl(\R^3\bigr)^* \times\bigl(
\R^3\bigr)^* \rightarrow\R^3 . %
\]
These correspond to the classical cross product in $3$-space.
Geometrically, $a_i \wedge a_j$ is the intersection point
of the lines $a_i$ and $a_j$ in $\mathbb{P}^2$, and
$b_i \vee b_j$ is the line spanned by the points $b_i$ and~$b_j$
in $\mathbb{P}^2$.
The pairing $(\R^3)^* \times\R^3 \rightarrow\R$ can be denoted
by either $\vee$ or $\wedge$. With these conventions, the operations
$\vee$ and $\wedge$ are alternating, associative and distributive.
For instance, the minor
%
\begin{equation}
\label{eqdeterminant} a_i \wedge a_j \wedge a_k
  =   \operatorname{det}(a_i,a_j,a_k)
\end{equation}
vanishes if and only if the lines $a_i, a_j$ and $a_k$ are concurrent.
Likewise, the polynomial
%
\begin{eqnarray}
\label{eqtwotwo}
&& (a_i \wedge a_j) \vee
b_{i'} \vee b_{k'}\nonumber
\\
&&\qquad  =  a_{i1} a_{j2}
b_{1i'} b_{2k'}-a_{i1}a_{j2}
b_{1k'} b_{2i'}+a_{i1} a_{j3}
b_{1i'} b_{3k'} -a_{i1} a_{j3}
b_{1k'} b_{3i'}
\nonumber\\[-8pt]\\[-8pt]\nonumber
&&\quad\qquad{} -a_{i2} a_{j1} b_{1i'} b_{2k'} +
a_{i2} a_{j1} b_{1k'} b_{2i'} +
a_{i2} a_{j3} b_{2i'} b_{3k'} -
a_{i2} a_{j3} b_{2k'} b_{3i'}\nonumber
\\
&&\quad\qquad{} - a _{i3} a_{j1} b_{1i'} b_{3k'} +
a_{i3} a_{j1} b_{1k'} b_{3i'} -
a_{i3} a_{j2} b_{2i'} b_{3k'} +
a_{i3} a_{j2} b_{2k'} b_{3i'} \nonumber
\end{eqnarray}
expresses the condition that the
lines $a_i$ and $a_j$ intersect in a point
on the line given by
$b_{i'}$ and~$b_{k'}$.
Of special interest is the following
formula involving four rows of $A$ and three columns of~$B$:
%
\begin{equation}
\label{eqsixthree} \bigl( \bigl((a_i \wedge a_j)\vee
b_{i'} \bigr) \wedge a_k \bigr) \vee\bigl( \bigl(
(a_i \wedge a_j) \vee b_{j'}\bigr) \wedge
a_l \bigr) \vee b_{k'}.
\end{equation}
Its expansion is a bihomogeneous polynomial
of degree $(6,3)$ with $330$ terms in~$(A,B)$.

A matrix $P \in\R^{m \times n}$ has
nonnegative rank $\leq3$ if it admits a
factorization $P = A B$ with $A$ and $B$
nonnegative. The set of such matrices $P$
with $p_{++} = 1$ is precisely the mixture model $\mathcal{M}$
discussed in the earlier sections.
Comparing with (\ref{eqmixformula}), we here subsume
the diagonal matrix $\Lambda$ into either $A$ or $B$.
In what follows, we consider the set
$\mathcal{N}$ of\vspace*{1pt} pairs $(A,B)$ whose product
$AB$ has nonnegative rank $\leq3$.
Thus, $\mathcal{N}$ is a semialgebraic subset of
$\R^{m \times3} \oplus\R^{3 \times n}$. We shall prove:

\begin{theorem}\label{theoremsemialgebraicdescription}
A pair $(A,B)$ is in
$\mathcal{N}$ if and only if
$AB \geq0$ and the following condition~holds: either
$\rank(AB)<3$, or $\rank(AB)=3$ and there exist indices
$i,j \in[m]$, $i',j' \in[n]$
such~that:
\begin{longlist}[i]
\item[] $\operatorname{sign}(\ref{eqdeterminant})$ is the same
or zero for all $k \in[m] \setminus\{i,j\}$,

\item[] and   $\operatorname{sign}(\ref{eqtwotwo})$ is the same or zero for
all $k'\in[n]\setminus\{i'\}$,

\item[] and   $\operatorname{sign}((\ref{eqtwotwo})[i'   \rightarrow  j'])$ is the same
or zero for
all $k'\in[n]\setminus\{j'\}$,

\item[] and   {(\ref{eqsixthree})} $\cdot$
{(\ref{eqsixthree}) $[k   \leftrightarrow  l]$} $ \geq 0  $
for all
$  \{k,l\}\subseteq[m]\setminus\{i,j\}$ and $k'\in
[n]\setminus\{i',j'\} $,

\item[]  or there exist $i,j \in[n]$, $i',j' \in[m]$
such that these conditions hold after swapping $A$ with $B^T$.
\end{longlist}
\end{theorem}

Here, $[m] = \{1,2,\ldots,m\}$, and
the notation $[i'   \rightarrow  j']$ means that the index $i'$ is
replaced by the index $j'$
in the preceding expression, and $[k   \leftrightarrow  l]$ means
that $k$ and $l$ are switched.

Theorem~\ref{theoremsemialgebraicdescription}
is our main result in Section~\ref{sec3}. It gives a finite disjunction
of conjunctions of polynomial inequalities in $A$ and $B$,
and thus a quantifier-free first order formula for $\mathcal{N}$.
This represents our mixture model as follows:
to test whether $P $ lies in $\mathcal{M}$, check whether $\operatorname{rank}(P) \leq3$;
if yes, compute any rank $3$ factorization $P = AB$ and
check whether $(A,B)$ lies in $\mathcal{N}$.
Code for performing these computations in {\tt Macaulay2}
is posted on our website.

\begin{figure}

\includegraphics{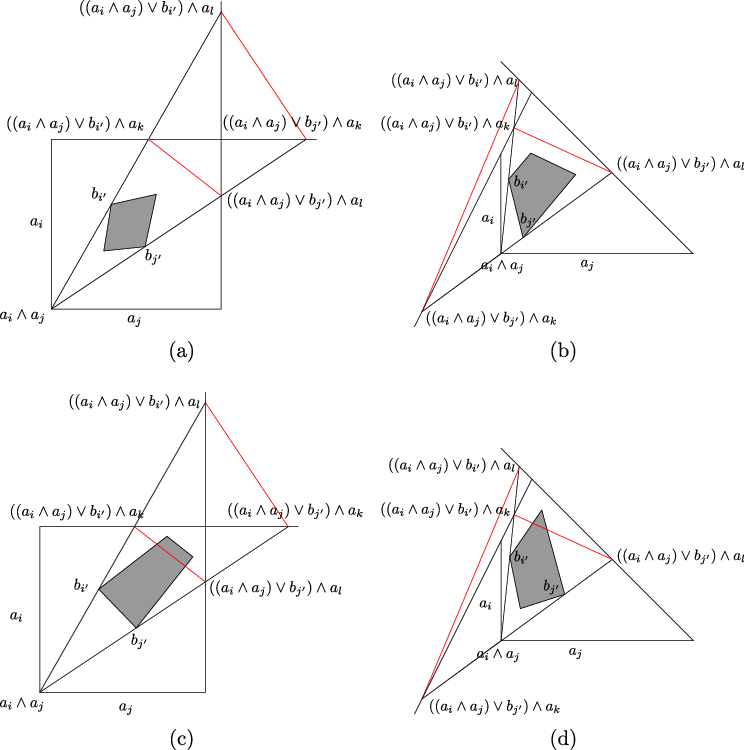}

\caption{In the diagrams \textup{(a)} and \textup{(b)}, the conditions of
Theorem~\protect\ref{theoremsemialgebraicdescription} are satisfied
for the chosen $i,j,i',j'$. In the
diagrams \textup{(c)} and \textup{(d)}, the conditions of Theorem~\protect\ref
{theoremsemialgebraicdescription} fail for the chosen $i,j,i',j'$.}\label{figsemialgebraicdescription}
\end{figure}

Theorem~\ref{theoremsemialgebraicdescription} is an algebraic
translation of a geometric algorithm. For an illustration, see
Figure~\ref{figsemialgebraicdescription}.
In the rest of the section, we will study the geometric description of
nonnegative rank that leads to the algorithm.
Let $P$ be a nonnegative $m \times n$ matrix of rank $r$.
We write
$\operatorname{span}(P)$ and $\cone(P)$ for the linear space and
the cone spanned by the columns of $P$, and we define
%
\begin{eqnarray}
\mathcal{A}&=&\operatorname{span}(P) \cap\Delta_{m-1} \quad\mbox{and}\quad
\mathcal{B}=\cone(P) \cap\Delta_{m-1}.
\end{eqnarray}

The matrix $P$ has a size $r$ nonnegative factorization if and only if
there exists a polytope $\Delta$ with $r$ vertices such that $\mathcal
{B} \subseteq\Delta\subseteq\mathcal{A}$; see
\cite{MSS}, Lemma~2.2. Without loss of generality, we will assume in
the rest of this section that the vertices of $\Delta$ lie on the
boundary of $\mathcal{A}$. We write $\mathcal{M}_r$ for the
set of $m \times n$-matrices of nonnegative rank $\leq r$.
Here is an illustration that is simpler than Example~\ref{exgreencurve}:

\begin{example}
In \cite{FH}, Section~2.7.2, the following family of matrices of rank${}\leq3$ is considered:
%
\begin{eqnarray}
P(a,b)= \lleft[\matrix{ 1-a & 1+a & 1+a & 1-a
\cr
1-b & 1-b & 1+b & 1+b
\cr
1+a & 1-a & 1-a & 1+a
\cr
1+b & 1+b & 1-b & 1-b } \rright].
\end{eqnarray}
Here, $\mathcal{B}$ is a rectangle and $\mathcal{A}=\{x\in\Delta
_3\dvtx x_1-x_2+x_3-x_4=0\}$ is a square, see Figure~\ref
{fignonnegativerankgeometricdescription}.
Using Theorem~\ref{theoremsemialgebraicdescription},
we can check that $P(a,b)$ lies in $\mathcal{M}_3$
if and only if $ab + a + b \leq1$.
%
\begin{figure}

\includegraphics{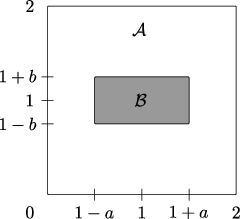}

\caption{The matrix $P(a,b)$ defines a nested pair of rectangles.}\label{fignonnegativerankgeometricdescription}
\end{figure}
\end{example}

\begin{lemma}\label{lemmageometricdescriptionofinteriorpoints}
A matrix $P\in\R^{m\times n}_{\geq0}$ of rank $r$ lies in the
interior of $\mathcal{M}_r$ if and only if there exists an
$(r-1)$-simplex $\Delta\subseteq\mathcal{A}$ such that $\mathcal
{B}$ is contained in the interior of $\Delta$.
It lies on the boundary of $\mathcal{M}_r$ if and only if
every $(r-1)$-simplex $\Delta$ with $\mathcal{B} \subseteq\Delta
\subseteq\mathcal{A}$ contains a vertex of $\mathcal{B}$ on its boundary.
\end{lemma}

\begin{figure}

\includegraphics{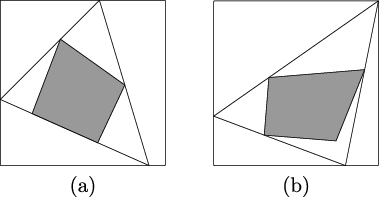}

\caption{Critical configurations.}
\label{figcriticalconfigurations}
\end{figure}

For $r=3$, Mond et al. \cite{MSS} prove the following result.
Suppose $\mathcal{B}\subseteq\Delta\subseteq\mathcal{A}$
and every edge of $\Delta$ contains a vertex of $\mathcal{B}$.
Then $t\mathcal{B} \subseteq\Delta' \subseteq\mathcal{A}$
for some triangle $\Delta'$ and some $t>1$, unless:
\begin{longlist}[(a)]
\item[(a)]
an edge of $\Delta$ contains an edge of $\mathcal{B}$, or
\item[(b)] a vertex of $\Delta$ coincides with a vertex of $\mathcal{A}$.
\end{longlist}
Here, the dilate $t\mathcal{B}$ is taken with respect to a point in
the interior of $\mathcal{B}$.
By Lemma~\ref{lemmageometricdescriptionofinteriorpoints}, this
means that $P$ lies
in the interior of $\mathcal{M}_3^{m\times n}$ unless one of (a) and~(b) holds.
The conditions (a) and (b) are shown in
Figure~\ref{figcriticalconfigurations}.
For the proof of this result, we refer to~\cite{MSS}, Lemmas~3.10~and~4.3.

\begin{corollary}\label{corollaryboundarypointsforrankthreematrices}
A matrix $P\in\mathcal{M}_3$ lies on the boundary of $\mathcal{M}_3$
if and only~if:
\begin{itemize}
\item$P$ has a zero entry, or
\item$\rank(P)=3$ and if $\Delta$ is any triangle with $\mathcal{B}
\subseteq\Delta\subseteq\mathcal{A}$ then every edge of $\Delta$
contains a vertex of $\mathcal{B}$, and \textup{(a)} or \textup{(b)} holds.
\end{itemize}
\end{corollary}

\begin{corollary}\label{corollarygeometricconditionsforamatrixtohavenonnegativerank3}
A matrix $P\in\R^{m\times n}_{\geq0}$
has nonnegative rank $ \leq3$ if and only if:
\begin{itemize}
\item$\rank(P)<3$, or
\item$\rank(P)=3$ and there exists a triangle $\Delta$ with
$\mathcal{B} \subseteq\Delta\subseteq\mathcal{A}$ such that a
vertex of $\Delta$ coincides with a vertex of $\mathcal{A}$, or
\item$\rank(P)=3$ and there exists a triangle $\Delta$ with
$\mathcal{B} \subseteq\Delta\subseteq\mathcal{A}$ such that an
edge of $\Delta$ contains an edge of $\mathcal{B}$.
\end{itemize}
\end{corollary}

Corollary~\ref{corollarygeometricconditionsforamatrixtohavenonnegativerank3}
provides a geometric algorithm similar to that of Aggarwal et~al. \cite
{ABRS} for checking
whether a matrix has
nonnegative rank $3$. For the algorithm, we need to consider one
condition for every vertex of $\mathcal{A}$ and one condition for
every edge of $\mathcal{B}$. We now explain these conditions.

Let $v$ be a vertex of $\mathcal{A}$. Let $b_1,b_2$ be the vertices of
$\mathcal{B}$ such that
$l_1 = \overline{v b_1} $ and $l_2 = \overline{v b_2}$
support $\mathcal{B}$. Let $\Delta$ be the convex hull of $v$ and the
other two intersection points of the lines $l_1,l_2$ with the boundary
of $\mathcal{A}$. If $\mathcal{B} \subseteq\Delta$, then $P$ has
nonnegative rank $3$.

Let $l$ be the line spanned by an edge of $\mathcal{B}$. Let $v_1,v_2$
be the intersection points of $l$ with
$\partial\mathcal{A}$. Let $b_1,b_2$ be the vertices of $\mathcal
{B}$ such that
$l_1 = \overline{v_1 b_1}$ and
$l_2 = \overline{v_2 b_2}$ support~$\mathcal{B}$. Let $v_3$ be the
intersection point of $l_1$ and $l_2$. If $\conv(v_1,v_2,v_3)\subseteq
\mathcal{A}$, then $P$ has nonnegative rank $3$.

\begin{pf*}{Proof of Theorem~\ref{theoremsemialgebraicdescription}}
Let $\rank(P)=3$ and consider
any factorization $P = AB$ where
$a_1,\ldots,a_m\in(\R^3)^*$ are the row vectors of $A$
and $b_1,\ldots,b_n\in\R^3$ are the column vectors of $B$.
The map $x \mapsto Ax$ identifies $\R^3$ with the common column space
of $A$ and $P$. Under this identification, and by passing from
\mbox{$3$-}dimensional cones
to polygons in $\R^2$,
we can assume that the edges of $\mathcal{A}$ are given by $a_1,\ldots
,a_m$ and the vertices of $\mathcal{B}$ are given by $b_1,\ldots,b_n$.

To test whether $P$ belongs to $\mathcal{M}_3$,
we use the geometric conditions
in Corollary~\ref{corollarygeometricconditionsforamatrixtohavenonnegativerank3}.
These still involve a quantifier over $\Delta$. Our aim is to translate
them into the given quantifier-free formula, referring only to the
vertices $b_i$ of $\mathcal{B}$ and the
edges $a_j$ of $\mathcal{A}$.
First, we check with the sign condition on (\ref{eqdeterminant}) that
the intersection point $a_i\wedge a_j$ defines a vertex of $\mathcal
{A}$. Next we verify that the lines $(a_i\wedge a_j)\vee b_{i'}$ and
$(a_i\wedge a_j)\vee b_{j'}$ are supporting $\mathcal{B}$, that
is,~all vertices of $\mathcal{B}$ lie on the same side of the lines
$(a_i\wedge a_j)\vee b_{i'}$ and $ (a_i\wedge a_j)\vee b_{j'}$. For
this, we use the sign conditions on
(\ref{eqtwotwo}) and (\ref{eqtwotwo}) $[i'   \rightarrow  j']$.

Finally, we need to check whether all vertices of $\mathcal{B}$ belong
to the convex hull of $a_i\wedge a_j$ and the other two intersection
points of the lines $(a_i\wedge a_j)\vee b_{i'}$ and $(a_i\wedge
a_j)\vee b_{j'}$ with the boundary of $\mathcal{A}$. Fix $\{k,l\}
\subseteq[m]\setminus\{i,j\}$. If either the line $(a_i\wedge
a_j)\vee b_{i'}$ intersects $a_k$ or the line $(a_i\wedge a_j)\vee
b_{j'}$ intersects $a_l$ outside $\mathcal{A}$, then the polygon
$\mathcal{B}$ lies completely on one side of the line $(((a_i\wedge
a_j)\vee b_{i'}) \wedge a_k) \vee(((a_i\wedge a_j)\vee b_{j'})\wedge
a_l)$. Similarly, if either the line $(a_i\wedge a_j)\vee b_{i'}$
intersects $a_l$ or the line $(a_i\wedge a_j)\vee b_{j'}$ intersects
$a_k$ outside $\mathcal{A}$, then the polygon $\mathcal{B}$ lies
completely on one side of the line $(((a_i\wedge a_j)\vee b_{i'})
\wedge a_l) \vee(((a_i\wedge a_j)\vee b_{j'})\wedge a_k)$. Then the
condition (\ref{eqsixthree}) $\cdot$ (\ref{eqsixthree}) $[k
\leftrightarrow  l]\geq0$ is automatically satisfied
for all $ k'\in[n]\setminus\{i',j'\}$. If the intersection points
$((a_i\wedge a_j)\vee b_{i'})\wedge a_k$ and $((a_i\wedge a_j)\vee
b_{j'})\wedge a_l$ are on the boundary of $\mathcal{A}$, then the
polygon $\mathcal{B}$ is on one side of $(((a_i\wedge a_j)\vee
b_{i'})\wedge a_l)\vee(((a_i\wedge a_j)\vee b_{j'})\wedge a_k)$.
In this case, we use the conditions
(\ref{eqsixthree}) $\cdot$ (\ref{eqsixthree}) $[k
\leftrightarrow  l]\geq0$
to check whether $\mathcal{B}$ is also on one side of
the line $(((a_i\wedge a_j)\vee b_{i'})\wedge a_k)\vee(((a_i\wedge
a_j)\vee b_{j'})\wedge a_l)$.
For an illustration, see Figure~\ref{figsemialgebraicdescription}.
\end{pf*}

We wish to reiterate that the semialgebraic formula for our model
in Theorem~\ref{theoremsemialgebraicdescription}
is quantifier-free. It is a finite Boolean combination of polynomial
inequalities with rational coefficients.

\begin{corollary}\label{corollaryrationalfactorizations}
If a rational $m \times n$ matrix $P $ has nonnegative rank $\leq3$, then
there exists a nonnegative rank $\leq3$ factorization
$P = AB$ where all entries of $A$ and $B$ are rational numbers.
\end{corollary}


This answers a question of Cohen and Rothblum in~\cite{CR} for
matrices of nonnegative rank $3$. It is not known whether this result
holds in general.
In Section~\ref{sec5}, we apply Theorem~\ref
{theoremsemialgebraicdescription}
to derive the topological boundary and the algebraic boundary of
$\mathcal{M}$.
Also, using what follows in Section~\ref{sec4},
we shall see how these boundaries are detected by the EM algorithm.

\section{Decomposing the variety of EM fixed points}
\label{sec4}

After this in-depth study of the geometry of our model,
we now return to the fixed points of Expectation--Maximization on
$\mathcal{M}$.
We fix the polynomial ring $\Q[A,R,B]$ in
$mr+mn+rn$ indeterminates $a_{ik}$, $r_{ij}$ and $b_{kj}$.
Let $\mathcal{F}$ denote the ideal generated by the
entries of the matrices
$A\star(R\cdot B^T) $
and $ B\star(A^T\cdot R) $ in
(\ref{EMfixed2}).
Also, let $\mathcal{C}$ denote the ideal generated by
the entries of $R \cdot B^T$ and $A^T \cdot R$.
Thus, $\mathcal{F}$ is generated by
$mr+rn$ cubics, $\mathcal{C}$ is generated
by $mr + rn$ quadrics, and we have
the inclusion \mbox{$\mathcal{F} \subset\mathcal{C}$}.
By Theorem~\ref{thmEMfixedTheta},
the variety $V(\mathcal{C})$ consists of those parameters $A, R, B$
that correspond to critical points for the log-likelihood function
$\ell_U$,
while the variety $V(\mathcal{F})$ encompasses all the fixed points of the
EM algorithm. We are interested in the
irreducible components of the varieties $V(\mathcal{F})$ and
$V(\mathcal{C})$.
These are the zero sets
of the minimal primes of $\mathcal{F}$ and $\mathcal{C}$,
respectively. More precisely, if $\mathcal{F}$ has minimal primes
$\mathcal{F}_1, \mathcal{F}_2,\ldots, \mathcal{F}_N$, then
$V(\mathcal{F}_i)$ are the irreducible components of $V(\mathcal
{F})$, and $V(\mathcal{F}) = \bigcup_iV(\mathcal{F}_i)$.

Recall that the matrix $R $ represents the gradient of the
log-likelihood function~\mbox{$\ell_U$}, that is,
%
\begin{equation}
\label{eqwhatisR} r_{ij}   =   u_{++} - \frac{u_{ij}}{p_{ij}}
  =   u_{++} - \frac{u_{ij}}{\sum_k a_{ik}\lambda_kb_{kj}}.
\end{equation}
The set of EM-fixed points corresponding to
a data matrix $U\in\mathbb{N}^{m\times n}$ is defined by the
ideal $ \mathcal{F}' \subset\mathbb{Q}[A, B, \Lambda]$ that
is obtained from $\mathcal{F}$ by substituting
(\ref{eqwhatisR}), clearing denominators, and saturating.
Note that $V(\mathcal{F}' ) = \bigcup_iV(\mathcal{F}_i' )$. So,
studying the minimal primes $\mathcal{F}_i$ will help us study the
fixed points of EM.
A big advantage of considering $\mathcal{F}$ rather than $\mathcal
{F}'$ is
that $\mathcal{F}$ is much simpler. Also, it does not depend on the
data $U$.
This allows a lot of the work in exact
MLE using algebraic methods
(as in Example~\ref{exUabmatrix})
to be done in a preprocessing stage.

There are two important points we wish to make in this section:
\begin{enumerate}[2.]
\item[1.] the minimal primes of $\mathcal{F}$ have interesting
statistical interpretations, and

\item[2.] the nontrivial boundaries of the mixture model $\mathcal
{M}$ can be detected from this.
\end{enumerate}
We shall explain these points by working out two cases that are larger
than Example~\ref{exEM22}.

Example~\ref{exEM22} showed that $\mathcal{F}$ is
not radical but has embedded components.
Here, we focus on the minimal primes $\mathcal{F}_i$ of $\mathcal{F}$,
as these correspond to geometric components of $V(\mathcal{F})$.
If $\mathcal{F}_i$ is also a minimal prime of $\mathcal{C}$ then
$\mathcal{F}_i$ is a \textit{critical} prime\vadjust{\goodbreak} of $\mathcal{F}$.
Not every minimal prime of $\mathcal{C}$
is a minimal prime of $\mathcal{F}$. For instance, for
$m=n=2, r=1$, the ideal
$ \mathcal{C} $ is the intersection of the
first prime in Example~\ref{exEM22} and
$\langle a_{11}, a_{21}, b_{11}, b_{12} \rangle$.
The latter is not minimal over $\mathcal{F}$.
We now generalize this example:

\begin{proposition} \label{propquivercycle}
The ideal $\mathcal{C}$ has precisely $r+1$ minimal primes, indexed
by $k=1,\ldots,r+1  $:
\begin{eqnarray*}
&& \mathcal{C} + \langle k \mbox{-minors of }A \rangle
+ \bigl\langle  (m-k+2) \mbox{-minors of }R \bigr\rangle
\\
&&\qquad{}+ \bigl\langle (n-m+k) \mbox{-minors of }B \bigr\rangle
\qquad
\mbox{if }m \leq n,
\\
&& \mathcal{C} + \bigl\langle (m-n+k) \mbox{-minors of }A
\bigr\rangle + \bigl\langle  (n-k+2) \mbox{-minors of }R \bigr\rangle
\\
&&\qquad{}+ \langle k \mbox{-minors of }B \rangle
\qquad\mbox{if }m \geq n.
\end{eqnarray*}
Moreover, the ideal $\mathcal{C}$ is radical, and hence, it equals the
intersection of its minimal primes.
\end{proposition}

We refer to
Example~\ref{exC443Appendix} for an illustration of
Proposition~\ref{propquivercycle}. The proof we give in
Appendix~\ref{appproofs} relies on
methods from representation theory.
The duality relation~\eqref{eqduality} plays an important role.

We now proceed to our case studies of
the minimal primes of the EM fixed ideal~$\mathcal{F}$.

\begin{example} \label{exem33}
Let $m=n=3$ and $r=2$. The ideal $\mathcal{F}$ has $37$ minimal primes,
in six classes.
The first three are the minimal primes of
the critical ideal $\mathcal{C}$, as seen in Proposition~\ref
{propquivercycle}:
\begin{eqnarray*}
I_1 &=& \langle
r_{23}r_{32}-r_{22}r_{33},r_{13}r_{32}-r_{12}r_{33},r_{23}r_{31}-r_{21}r_{33},r_{22}r_{31}-r_{21}r_{32},
\\
&&\hspace*{4pt} r_{13}r_{31}-r_{11}r_{33},r_{12}r_{31}-r_{11}r_{32},r_{13}r_{22}-r_{12}r_{23},r_{13}r_{21}-r_{11}r_{23},
\\
&&\hspace*{4pt} r_{12}r_{21}-r_{11}r_{22},b_{21}r_{31}+b_{22}r_{32}+b_{23}r_{33},b_{11}r_{31}+b_{12}r_{32}+b_{13}r_{33},
\\
&&\hspace*{4pt} b_{21}r_{21}+b_{22}r_{22}+b_{23}r_{23}, b_{11}r_{21}+b_{12}r_{22}+b_{13}r_{23},
\\
&&\hspace*{4pt} a_{12}r_{13}+a_{22}r_{23}+a_{32}r_{33}, a_{11}r_{13}+a_{21}r_{23}+a_{31}r_{33},
\\
&&\hspace*{4pt} a_{12}r_{12}+a_{22}r_{22}+a_{32}r_{32},a_{11}r_{12}+a_{21}r_{22}+a_{31}r_{32},
\\
&&\hspace*{4pt} b_{21}r_{11}+b_{22}r_{12}+b_{23}r_{13},b_{11}r_{11}+b_{12}r_{12}+b_{13}r_{13},
\\
&&\hspace*{75pt}\hspace*{4pt}a_{12}r_{11}+a_{22}r_{21}+a_{32}r_{31}, a_{11}r_{11}+a_{21}r_{21}+a_{31}r_{31}\rangle,
\\
I_2  &=   & \langle
r_{13}r_{22}r_{31}-r_{12}r_{23}r_{31}-r_{13}r_{21}r_{32}+r_{11}
r_{23}r_{32}+r_{12}r_{21}r_{33}-r_{11}r_{22}r_{33},
\\
&&\hspace*{4pt} b_{21}r_{31}+b_{22}r_{32}+b_{23}r_{33},b_{11}r_{31}+b_{12}r_{32}+b_{13}r_{33},
\\
&&\hspace*{4pt}b_{21}r_{21}+b_{22}r_{22}+ b_{23}r_{23}, b_{11}r_{21}+b_{12}r_{22}+b_{13}r_{23},
\\
&&\hspace*{4pt} a_{12}r_{13}+a_{22}r_{23}+a_{32}r_{33},a_{11}r_{13}+a_{21}r_{23}+a_{31}r_{33},
\\
&&\hspace*{4pt}  a_{12}r_{12}+a_{22}r_{22}+a_{32}r_{32},a_{11}r_{12}+a_{21}r_{22}+a_{31}r_{32},
\\
&&\hspace*{4pt} b_{21}r_{11}+b_{22}r_{12}+b_{23}r_{13}, b_{11}r_{11}+b_{12}r_{12}+b_{13}r_{13},
\\
&&\hspace*{4pt} a_{12}r_{11}+a_{22}r_{21}+a_{32}r_{31},a_{11}r_{11}+a_{21}r_{21}+a_{31}r_{31},
\\
&&\hspace*{4pt} b_{13}b_{22}-b_{12}b_{23},b_{13}b_{21}-b_{11}b_{23},b_{12}b_{21}-b_{11}b_{22},
\\
&&\hspace*{97pt}\hspace*{4pt} a_{31}a_{22}-a_{21}a_{32}, a_{31}a_{12}-a_{11}a_{32},a_{21}a_{12}-a_{11}a_{22}
\rangle,
\\
I_3   &= & \langle
a_{11},a_{21},a_{31},a_{12},a_{22},a_{32},b_{11},b_{12},b_{13},b_{21},
b_{22}, b_{23} \rangle.
\end{eqnarray*}
In addition to these three, $\mathcal{F}$ has $12$ noncritical
components like
\begin{eqnarray*}
J_1   &=& \langle a_{11},a_{21},r_{31}, r_{32}, r_{33}, r_{13}r_{22}-r_{12}r_{23},r_{13}r_{21}-r_{11}r_{23},
\\
&&\hspace*{4pt} r_{12}r_{21}-r_{11}r_{22},b_{21}r_{21}+b_{22}r_{22}+b_{23}r_{23},b_{21}r_{11}+b_{22}r_{12}+b_{23}r_{13},
\\
&&\hspace*{83pt}\hspace*{4pt} a_{12}r_{13}+a_{22}r_{23},a_{12}r_{12}+a_{22}r_{22},a_{12}r_{11}+a_{22}r_{21} \rangle,
\end{eqnarray*}
four noncritical components like
\begin{eqnarray*}
J_2  &= & \langle a_{11},a_{21},a_{31},
r_{13}r_{22}r_{31}-r_{12}r_{23}r_{31}-r_{13}r_{21}r_{32}+r_{11}r_{23}r_{32}
\\
&&\hspace*{4pt}{} +r_{12}r_{21}r_{33}-r_{11}r_{22}r_{33},
b_{21}r_{21}+b_{22}r_{22}+b_{23}r_{23},
b_{21}r_{11}
\\
&&\hspace*{4pt}{}+b_{22}r_{12}+b_{23}r_{13},b_{21}r_{31}+b_{22}r_{32}+b_{23}r_{33},
a_{12}r_{13}+a_{22}r_{23}
\\
&&\hspace*{4pt}{}+a_{32}r_{33},
a_{12}r_{12}+a_{22}r_{22}+a_{32}r_{32},
a_{12}r_{11}+a_{22}r_{21}+a_{32}r_{31}
\rangle
\end{eqnarray*}
and $18$ noncritical components like
\begin{eqnarray*}
J_3   &= & \langle a_{11},a_{21},b_{11},b_{12},
r_{33},
r_{13}r_{22}r_{31}-r_{12}r_{23}r_{31}-r_{13}r_{21}r_{32}+r_{11}r_{23}r_{32},
\\
&&\hspace*{4pt} b_{21}r_{31}+b_{22}r_{32},
b_{21}r_{21}+b_{22}r_{22}+b_{23}r_{23},
b_{21}r_{11}+b_{22}r_{12}+b_{23}r_{13},
\\
&&\hspace*{8pt} a_{12}r_{13}+a_{22}r_{23},
a_{12}r_{12}+a_{22}r_{22}+a_{32}r_{32},
a_{12}r_{11}+a_{22}r_{21}+a_{32}r_{31}
\rangle.
\end{eqnarray*}
Each of the $34$ primes $J_1,J_2,J_3$ specifies a face of
the polytope $\Theta$, as it contains
two, three or four of the parameters $a_{ik},b_{kj}$,
and expresses rank constraints on the matrix $R = [r_{ij}]$.
\end{example}

\begin{remark}
Assuming the sample size $u_{++}$ to be known, we can recover the data
matrix $U$
from the gradient $R$ using the formula $ U = R \star P + u_{++} P$.
In coordinates, this says
\[
u_{ij}   =   (r_{ij} + u_{++}) \cdot
p_{ij} \qquad\mbox{for }   i \in[m] ,j \in[n]. %
\]
This formula is obtained by rewriting (\ref{eqwhatisR}).
Hence, $r_{ij} = 0$ holds if and only
if $ p_{ij} = u_{ij} / u_{++}$. This can be rephrased as follows.
If a minimal prime of $\mathcal{F}$ contains the unknown $r_{ij}$, then
the corresponding fixed points of the EM algorithm maintain the cell
entry $u_{ij}$
from the data.
\end{remark}

With this, we can now understand the meaning of the
various components in Example~\ref{exem33}.
The prime $I_1$ parametrizes
critical points $P$ of rank $2$. This represents the behavior of the EM
algorithm
when run with random starting parameters in the\vadjust{\goodbreak} interior of $\Theta$.
For special data $U$, the MLE will be a rank $1$ matrix, and
such cases are captured by the critical component $I_2$.
The components $I_3$ and $J_2$ can be disregarded because
each of them contains a column of $A$. This would force the entries of
that column to sum to $0$, which is impossible in $\Theta$.

The components $J_1$ and $J_3$ describe interesting
scenarios that are realized by starting the EM algorithm with
parameters on
the boundary of the polytope $\Theta$. On the components
$J_1$, the EM algorithm produces an estimate
that maintains one of the rows or columns from the data $U$,
and it replaces the remaining table of format $2 \times3$
or $3 \times2$ by its MLE of rank $1$.
This process amounts to fitting
a context specific independence (CSI)
model to the data. Following Georgi and Schliep \cite{GS}, CSI means that
independence holds only for some values of the involved variables.
Namely,
$J_1$ expresses the constraint that
$X$ is independent of $Y$ given that $Y$ is either $1$ or $2$.
Finally, on the components $J_3$, we have $\operatorname{rank}(A) = \operatorname{rank}(B) = 2$
and $r_{ij} = 0$ for one cell entry $(i,j)$. 

\begin{definition}
Let $\mathcal{F} = \langle A\star(R\cdot
B^T) ,
B\star(A^T\cdot R) \rangle$ be the ideal of EM fixed points.
A minimal prime of $\mathcal{F}$ is
called \textit{relevant} if it contains none of the
$mn$ polynomials $p_{ij} = \sum_{k=1}^r a_{ik} b_{kj}$.
\end{definition}

In Example~\ref{exEM22}, only the first minimal prime is relevant. In
Example~\ref{exem33}, all minimal primes besides $I_3$ are relevant.
Restricting to the relevant minimal primes is justified because the EM
algorithm never outputs a matrix containing zeros for positive starting
data. Note also that
the $p_{ij} $ appear in the denominators in
the expressions (\ref{eqwithdenom}) that were used in our derivation
of $\mathcal{F}$.

Our main result in this section is
the computation in Theorem~\ref{thmem44}.
We provide a census of
EM fixed points for $4 \times 4$-matrices
of rank $r = 3$. This is the smallest case where
rank can differ from nonnegative rank,
and the boundary hypersurfaces~(\ref{eqsixthree}) appear.

\begin{theorem}
\label{thmem44}
Let $m=n=4$ and $r=3$. The radical of the EM fixed point ideal
$\mathcal{F}$ has
49,000 relevant primes.
These come in $108$ symmetry classes,
listed in Table~\ref{tableminimalprimes44}.
\end{theorem}

\begin{table}
\tabcolsep=0pt
\caption{Minimal primes of the
EM fixed ideal $\mathcal{F}$ for $4 \times 4$-matrices
of rank $3$}\label{tableminimalprimes44}
\begin{tabular*}{\tablewidth}{@{\extracolsep{\fill}}@{}lcccd{4.0}cccccd{3.0}@{}}
\hline
\textbf{Set} $\bolds{S}$ & \multicolumn{1}{c}{$\bolds{|S|}$} & \multicolumn{1}{c}{$\bolds{a}$\textbf{'s}} & \multicolumn{1}{c}{$\bolds{b}$\textbf{'s}} &
\multicolumn{1}{c}{\textbf{deg}} & \multicolumn{1}{c}{\textbf{codim}} & \multicolumn{1}{c}{\textbf{rA}} & \multicolumn{1}{c}{\textbf{rB}} &
\multicolumn{1}{c}{\textbf{rR}} & \multicolumn{1}{c}{\textbf{rP}} & \multicolumn{1}{c}{$\bolds{|\mathrm{orbit}|}$} \\
\hline
$ \varnothing$ & 0 & 0 & 0 & 1 & 24 & 0 & 0 & 4 & 0 & 1\\
& 0 & 0 & 0 & 1630 & 19 & 1 & 1 & 3 & 1 & 1\\
& 0 & 0 & 0 & 3491 & 16 & 2 & 2 & 2 & 2 & 1\\
& 0 & 0 & 0 & 245 & 15 & 3 & 3 & 1 & 3 & 1
\\
$\{a_{11}\}$ & 1 & 1 & 0 & 245 & 16 & 3 & 3 & 1 & 3 & 24\\
& 1 & 1 & 0 & 3491 & 17 & 2 & 2 & 2 & 2 & 24
\\
$\{a_{11},a_{21}\}$ & 2 & 2 & 0 & 20 & 17 & 3 & 3 & 1 & 3 & 36\\
& 2 & 2 & 0 & 245 & 17 & 3 & 3 & 1 & 3 & 36\\
& 2 & 2 & 0 & 1460 & 17 & 2 & 3 & 2 & 2 & 36
\\
$\{a_{11},a_{21},a_{31}\}$ & 3 & 3 & 0 & 53 & 17 & 3 & 3 & 1 & 3 & 24\\
& 3 & 3 & 0 & 188 & 17 & 2 & 3 & 2 & 2 & 24
\\
$*\{a_{11},a_{21},b_{11},b_{12}\}*$ & 4 & 2 & 2 & 245 & 19 & 3 & 3 & 1& 3 & 108\\
& 4 & 2 & 2 & 20 & 19 & 3 & 3 & 1 & 3 & \multicolumn{1}{c@{}}{$108\times 2$} \\
& 4 & 2 & 2 & 1460 & 19 & 2 & 3 & 2 & 2 & \multicolumn{1}{c@{}}{$108\times 2$}\\
& 4 & 2 & 2 & 2370 & 20 & 2 & 2 & 3 & 2 & 108\\
& 4 & 2 & 2 & 240 & 19 & 3 & 3 & 2 & 3 & 108
\\
$\{a_{11},a_{21},b_{21},b_{22}\}$ & 4 & 2 & 2 & 825 & 18 & 3 & 3 & 2 & 3 & 216\\
$\{a_{11},a_{21},a_{31},a_{41}\}$ & 4 & 4 & 0 & 689 & 16 & 2 & 3 & 2 & 2 & 6\\
& 4 & 4 & 0 & 474 & 17 & 1 & 2 & 3 & 1 & 6\\
$\{a_{11},a_{21},a_{12},a_{22}\}$ & 4 & 4 & 0 & 592 & 17 & 2 & 3 & 2 &
2 & 36\\
& 4 & 4 & 0 & 9 & 17 & 3 & 3 & 1 & 3 & 36\\
$\{a_{11},a_{21},a_{32},a_{42}\}$ & 4 & 4 & 0 & 20 & 19 & 3 & 3 & 1 & 3 & \multicolumn{1}{c@{}}{$36\times 2$}\\
& 4 & 4 & 0 & 245 & 19 & 3 & 3 & 1 & 3 & 36\\
& 4 & 4 & 0 & 400 & 18 & 2 & 3 & 2 & 2 & 36\\
$\{a_{11},a_{21},a_{31},b_{11},b_{12}\}$ & 5 & 3 & 2 & 474 & 20 & 2 & 2
& 3 & 2 & 144\\
& 5 & 3 & 2 & 188 & 19 & 2 & 3 & 2 & 2 & 144\\
& 5 & 3 & 2 & 448 & 19 & 3 & 3 & 2 & 3 & 144\\
& 5 & 3 & 2 & 53 & 19 & 3 & 3 & 1 & 3 & 144\\
$\{a_{11},a_{21},a_{31},b_{21},b_{22}\}$ & 5 & 3 & 2 & 125 & 18 & 3 & 3
& 2 & 3 & 288\\
$\{a_{11},a_{21},a_{32},a_{42},b_{31}\}$ & 5 & 4 & 1 & 723 & 19 & 3 & 3
& 2 & 3 & 144\\
$\{a_{11},a_{21},a_{31},b_{11},b_{12},b_{13}\}$ & 6 & 3 & 3 & 689 & 19
& 3 & 3 & 2 & 3 & 48\\
& 6 & 3 & 3 & 474 & 20 & 2 & 2 & 3 & 2 & 48\\
$\{a_{11},a_{21},a_{31},b_{21},b_{22},b_{23}\}$ & 6 & 3 & 3 & 21 & 18 &
3 & 3 & 2 & 3 & 96\\
$\{a_{11},a_{21},a_{32},b_{11},b_{12},b_{33}\}$ & 6 & 3 & 3 & 2785 & 20
& 3 & 3 & 3 & 3 & 864\\
$*\{a_{11},a_{22},a_{33},b_{11},b_{22},b_{33}\}*$ & 6 & 3 & 3 & 9016 &
21 & 3 & 3 & 4 & 3 & 576\\
& 6 & 3 & 3 & 245 & 21 & 3 & 3 & 1 & 3 & 576\\
$\{a_{11},a_{21},a_{31},a_{41},b_{21},b_{22}\}$ & 6 & 4 & 2 & 265 & 17
& 2 & 3 & 2 & 2 & 72\\
$\{a_{11},a_{21},a_{12},a_{22},b_{11},b_{12}\}$ & 6 & 4 & 2 & 592 & 19
& 2 & 3 & 2 & 2 & 432\\
& 6 & 4 & 2 & 9 & 19 & 3 & 3 & 1 & 3 & 432\\
& 6 & 4 & 2 & 104 & 19 & 3 & 3 & 2 & 3 & 432\\
$\{a_{11},a_{21},a_{32},a_{42},b_{11},b_{12}\}$ & 6 & 4 & 2 & 825 & 20
& 3 & 3 & 2 & 3 & 432\\
& 6 & 4 & 2 & 100 & 20 & 3 & 3 & 2 & 3 & 432\\
& 6 & 4 & 2 & 400 & 20 & 2 & 3 & 2 & 2 & 432\\
$\{a_{11},a_{21},a_{32},a_{42},b_{31},b_{32}\}$ & 6 & 4 & 2 & 301 & 19
& 3 & 3 & 2 & 3 & 216\\
\hline
\end{tabular*}
\end{table}
\setcounter{table}{1}
\begin{table}
\tabcolsep=0pt
\caption{(Continued)}
\begin{tabular*}{\tablewidth}{@{\extracolsep{\fill}}@{}lcccd{4.0}cccccd{4.0}@{}}
\hline
\textbf{Set} $\bolds{S}$ & \multicolumn{1}{c}{$\bolds{|S|}$} & \multicolumn{1}{c}{$\bolds{a}$\textbf{'s}} & \multicolumn{1}{c}{$\bolds{b}$\textbf{'s}} &
\multicolumn{1}{c}{\textbf{deg}} & \multicolumn{1}{c}{\textbf{codim}} & \multicolumn{1}{c}{\textbf{rA}} & \multicolumn{1}{c}{\textbf{rB}} &
\multicolumn{1}{c}{\textbf{rR}} & \multicolumn{1}{c}{\textbf{rP}} & \multicolumn{1}{c}{$\bolds{|\mathrm{orbit}|}$} \\
\hline
$\{a_{11},a_{21},a_{31},a_{41},a_{12},a_{22}\}$ & 6 & 6 & 0 & 265 & 17
& 2 & 3 & 2 & 2 & 72\\
$\{a_{11},a_{21},a_{31},a_{12},a_{22},a_{32}\}$ & 6 & 6 & 0 & 35 & 16 &
2 & 3 & 2 & 2 & 24\\
$\{a_{11},a_{21},a_{12},a_{22},a_{33},a_{43}\}$ & 6 & 6 & 0 & 180 & 18
& 2 & 3 & 2 & 2 & 36\\
& 6 & 6 & 0 & 9 & 19 & 3 & 3 & 1 & 3 & 36\\
$\{a_{11},a_{21},a_{31},a_{41},b_{21},b_{22},b_{23}\}$ & 7 & 4 & 3 & 35
& 17 & 2 & 3 & 2 & 2 & 48\\
$\{a_{11},a_{21},a_{31},a_{42},b_{11},b_{12},b_{33}\}$ & 7 & 4 & 3 &
557 & 20 & 3 & 3 & 3 & 3 & 576\\
$\{a_{11},a_{21},a_{12},a_{22},b_{11},b_{12},b_{13}\}$ & 7 & 4 & 3 &
191 & 19 & 3 & 3 & 2 & 3 & 288\\
$\{a_{11},a_{21},a_{32},a_{42},b_{11},b_{12},b_{13}\}$ & 7 & 4 & 3 &
140 & 20 & 3 & 3 & 2 & 3 & 288\\
& 7 & 4 & 3 & 125 & 20 & 3 & 3 & 2 & 3 & 288\\
$\{a_{11},a_{21},a_{32},a_{42},b_{11},b_{12},b_{33}\}$ & 7 & 4 & 3 &
835 & 20 & 3 & 3 & 3 & 3 & 864\\
$\{a_{11},a_{21},a_{32},a_{42},b_{31},b_{32},b_{33}\}$ & 7 & 4 & 3 & 49
& 19 & 3 & 3 & 2 & 3 & 144\\
$*\{a_{11},a_{21},a_{32},a_{43},b_{11},b_{22},b_{33}\}*$ & 7 & 4 & 3 &
3087 & 21 & 3 & 3 & 4 & 3 & 1728\\
$\{a_{11},a_{21},a_{31},a_{12},a_{22},b_{21},b_{22}\}$ & 7 & 5 & 2 & 31
& 19 & 3 & 3 & 2 & 3 & 864\\
$\{a_{11},a_{21},a_{31},a_{12},a_{42},b_{11},b_{12}\}$ & 7 & 5 & 2 &
225 & 20 & 3 & 3 & 2 & 3 & 864\\
$\{a_{11},a_{21},a_{12},a_{32},a_{43},b_{11},b_{22}\}$ & 7 & 5 & 2 &
1193 & 21 & 3 & 3 & 3 & 3 & 1728\\
$\{a_{11},a_{21},a_{31},a_{41},b_{21},b_{22},b_{23},b_{24}\}$ & 8 & 4 &
4 & 85 & 15 & 2 & 2 & 3 & 1 & 6\\
$\{a_{11},a_{21},a_{31},a_{41},b_{21},b_{22},b_{33},b_{34}\}$ & 8 & 4 &
4 & 81 & 18 & 2 & 3 & 2 & 2 & 36\\
$\{a_{11},a_{21},a_{31},a_{42},b_{11},b_{12},b_{13},b_{34}\}$ & 8 & 4 &
4 & 557 & 20 & 3 & 3 & 3 & 3 & 96\\
$\{a_{11},a_{21},a_{31},a_{42},b_{11},b_{12},b_{33},b_{34}\}$ & 8 & 4 &
4 & 167 & 20 & 3 & 3 & 3 & 3 & 288\\
$\{a_{11},a_{21},a_{12},a_{22},b_{11},b_{12},b_{21},b_{22}\}$ & 8 & 4 &
4 & 850 & 20 & 2 & 2 & 3 & 2 & 108\\
& 8 & 4 & 4 & 45 & 19 & 3 & 3 & 2 & 3 & 108\\
$\{a_{11},a_{21},a_{12},a_{22},b_{11},b_{12},b_{23},b_{24}\}$ & 8 & 4 &
4 & 9 & 21 & 3 & 3 & 1 & 3 & 216\\
& 8 & 4 & 4 & 1024 & 21 & 3 & 2 & 3 & 2 & 216\\
& 8 & 4 & 4 & 104 & 21 & 3 & 3 & 2 & 3 & \multicolumn{1}{c@{}}{$216\times 2$}\\
& 8 & 4 & 4 & 592 & 21 & 2 & 3 & 2 & 2 & 216\\
$\{a_{11},a_{21},a_{12},a_{32},b_{11},b_{12},b_{21},b_{23}\}$ & 8 & 4 &
4 & 2121 & 21 & 3 & 3 & 3 & 3 & 1728\\
$\{a_{11},a_{21},a_{12},a_{32},b_{11},b_{12},b_{23},b_{24}\}$ & 8 & 4 &
4 & 2125 & 21 & 3 & 3 & 3 & 3 & 864\\
$\{a_{11},a_{21},a_{32},a_{42},b_{11},b_{12},b_{23},b_{24}\}$ & 8 & 4 &
4 & 2125 & 21 & 3 & 3 & 3 & 3 & 108\\
$\{a_{11},a_{21},a_{32},a_{42},b_{11},b_{12},b_{33},b_{34}\}$ & 8 & 4 &
4 & 265 & 20 & 3 & 3 & 3 & 3 & 216\\
$\{a_{11},a_{21},a_{32},a_{43},b_{11},b_{12},b_{23},b_{34}\}$ & 8 & 4 &
4 & 2205 & 21 & 3 & 3 & 4 & 3 & 432\\
$\{a_{11},a_{21},a_{32},a_{43},b_{11},b_{22},b_{23},b_{34}\}$ & 8 & 4 &
4 & 1029 & 21 & 3 & 3 & 4 & 3 & 864\\
$\{a_{11},a_{21},a_{31},a_{12},a_{22},b_{21},b_{22},b_{23}\}$ & 8 & 5 &
3 & 35 & 19 & 3 & 3 & 2 & 3 & 576\\
$\{a_{11},a_{21},a_{31},a_{12},a_{42},b_{11},b_{12},b_{13}\}$ & 8 & 5 &
3 & 265 & 20 & 3 & 3 & 2 & 3 & 576\\
$\{a_{11},a_{21},a_{12},a_{32},a_{43},b_{11},b_{12},b_{23}\}$ & 8 & 5 &
3 & 1185 & 21 & 3 & 3 & 3 & 3 & 3456\\
$\{a_{11},a_{21},a_{31},a_{41},a_{12},a_{22},b_{21},b_{22}\}$ & 8 & 6 &
2 & 425 & 18 & 2 & 3 & 3 & 2 & 432\\
$\{a_{11},a_{21},a_{12},a_{22},a_{33},a_{43},b_{11},b_{12}\}$ & 8 & 6 &
2 & 180 & 20 & 2 & 3 & 2 & 2 & 432\\
& 8 & 6 & 2 & 45 & 20 & 3 & 3 & 2 & 3 & 432\\
$\{a_{11},a_{21},a_{31},a_{41},a_{12},a_{22},a_{32},a_{42}\}$ & 8 & 8 &
0 & 85 & 15 & 1 & 3 & 3 & 1 & 6\\
$\{a_{11},a_{21},a_{31},a_{41},a_{12},a_{22},a_{33},a_{43}\}$ & 8 & 8 &
0 & 81 & 18 & 2 & 3 & 2 & 2 & 36\\
$\{a_{11},a_{21},a_{31},a_{12},a_{22},b_{11},b_{12},b_{23},b_{24}\}$ &
9 & 5 & 4 & 296 & 21 & 3 & 3 & 3 & 3 & 864\\
& 9 & 5 & 4 & 31 & 21 & 3 & 3 & 2 & 3 & 864\\
$\{a_{11},a_{21},a_{31},a_{12},a_{42},b_{11},b_{12},b_{21},b_{23}\}$ &
9 & 5 & 4 & 425 & 21 & 3 & 3 & 3 & 3 & 3456\\
$\{a_{11},a_{21},a_{31},a_{12},a_{42},b_{11},b_{12},b_{23},b_{24}\}$ &
9 & 5 & 4 & 425 & 21 & 3 & 3 & 3 & 3 & 864\\
$\{a_{11},a_{21},a_{12},a_{22},a_{33},b_{11},b_{12},b_{23},b_{24}\}$ &
9 & 5 & 4 & 839 & 21 & 3 & 3 & 3 & 3 & 432\\
\hline
\end{tabular*}
\end{table}
\setcounter{table}{1}
\begin{table}
\tabcolsep=0pt
\caption{(Continued)}
\begin{tabular*}{\tablewidth}{@{\extracolsep{\fill}}@{}ld{2.0}ccd{4.0}cccccd{4.0}@{}}
\hline
\textbf{Set} $\bolds{S}$ & \multicolumn{1}{c}{$\bolds{|S|}$} & \multicolumn{1}{c}{$\bolds{a}$\textbf{'s}} & \multicolumn{1}{c}{$\bolds{b}$\textbf{'s}} &
\multicolumn{1}{c}{\textbf{deg}} & \multicolumn{1}{c}{\textbf{codim}} & \multicolumn{1}{c}{\textbf{rA}} & \multicolumn{1}{c}{\textbf{rB}} &
\multicolumn{1}{c}{\textbf{rR}} & \multicolumn{1}{c}{\textbf{rP}} & \multicolumn{1}{c}{$\bolds{|\mathrm{orbit}|}$} \\
\hline
$\{a_{11},a_{21},a_{12},a_{32},a_{43},b_{11},b_{12},b_{13},b_{24}\}$ &
9 & 5 & 4 & 237 & 21 & 3 & 3 & 3 & 3 & 1152\\
$\{a_{11},a_{21},a_{12},a_{32},a_{43},b_{11},b_{12},b_{23},b_{24}\}$ &
9 & 5 & 4 & 875 & 21 & 3 & 3 & 3 & 3 & 864\\
$\{a_{11},a_{21},a_{31},a_{41},a_{12},a_{22},b_{21},b_{22},b_{23}\}$ &
9 & 6 & 3 & 85 & 18 & 2 & 3 & 3 & 2 & 288\\
$\{a_{11},a_{21},a_{31},a_{12},a_{22},a_{43},b_{11},b_{12},b_{23}\}$ &
9 & 6 & 3 & 163 & 21 & 3 & 3 & 3 & 3 & 1728\\
$\{a_{11},a_{21},a_{12},a_{22},a_{33},a_{43},b_{11},b_{12},b_{13}\}$ &
9 & 6 & 3 & 63 & 20 & 3 & 3 & 2 & 3 & 288\\
$\{a_{11},a_{21},a_{31},a_{41},a_{12},a_{22},a_{32},b_{21},b_{22}\}$ &
9 & 7 & 2 & 85 & 18 & 2 & 3 & 3 & 2 & 288\\
$\{
a_{11},a_{21},a_{31},a_{12},a_{22},b_{11},b_{12},b_{13},b_{21},b_{24}\}
$ & 10 & 5 & 5 & 425 & 21 & 3 & 3 & 3 & 3 & 1728\\
$\{
a_{11},a_{21},a_{31},a_{12},a_{22},b_{11},b_{12},b_{21},b_{22},b_{23}\}
$ & 10 & 5 & 5 & 85 & 20 & 3 & 3 & 3 & 3 & 864\\
$\{
a_{11},a_{21},a_{31},a_{12},a_{42},b_{11},b_{12},b_{13},b_{21},b_{24}\}
$ & 10 & 5 & 5 & 425 & 21 & 3 & 3 & 3 & 3 & 864\\
$\{
a_{11},a_{21},a_{31},a_{12},a_{42},b_{11},b_{12},b_{21},b_{23},b_{24}\}
$ & 10 & 5 & 5 & 85 & 21 & 3 & 3 & 3 & 3 & 864\\
$\{
a_{11},a_{21},a_{31},a_{12},a_{22},a_{32},b_{11},b_{12},b_{21},b_{22}\}
$ & 10 & 6 & 4 & 85 & 19 & 2 & 3 & 3 & 2 & 144\\
$\{
a_{11},a_{21},a_{31},a_{12},a_{22},a_{42},b_{11},b_{12},b_{21},b_{23}\}
$ & 10 & 6 & 4 & 85 & 21 & 3 & 3 & 3 & 3 & 1728\\
$\{
a_{11},a_{21},a_{31},a_{12},a_{22},a_{42},b_{11},b_{12},b_{23},b_{24}\}
$ & 10 & 6 & 4 & 85 & 21 & 3 & 3 & 3 & 3 & 432\\
$\{
a_{11},a_{21},a_{31},a_{12},a_{22},a_{43},b_{11},b_{12},b_{13},b_{24}\}
$ & 10 & 6 & 4 & 237 & 21 & 3 & 3 & 3 & 3 & 576\\
$\{
a_{11},a_{21},a_{31},a_{12},a_{22},a_{43},b_{11},b_{12},b_{23},b_{24}\}
$ & 10 & 6 & 4 & 175 & 21 & 3 & 3 & 3 & 3 & 864\\
$\{
a_{11},a_{21},a_{12},a_{22},a_{33},a_{43},b_{11},b_{12},b_{23},b_{24}\}
$ & 10 & 6 & 4 & 225 & 21 & 3 & 3 & 3 & 3 & 216\\
$\{
a_{11},a_{21},a_{31},a_{41},a_{12},a_{22},a_{32},b_{21},b_{22},b_{23}\}
$ & 10 & 7 & 3 & 85 & 18 & 2 & 3 & 3 & 2 & 192\\
$\{
a_{11},a_{21},a_{31},a_{12},a_{22},a_{42},b_{11},b_{12},b_{13},b_{21},b_{24}\} $ & 11 & 6 & 5 & 85 & 21 & 3 & 3 & 3 & 3 & 1728\\
$\{a_{11},a_{21},a_{31},a_{12},a_{22},a_{32}$,\\
\quad $b_{11},b_{12},b_{13},b_{21},b_{22},b_{23}\} $ & 12 & 6 & 6 & 85 & 20 & 2 & 2 & 3 & 2 & 48\\
$\{a_{11},a_{21},a_{31},a_{12},a_{22},a_{42}$,
\\
\quad $b_{11},b_{12},b_{13},b_{21},b_{22},b_{24}\}$ & 12 & 6 & 6 & 85 & 21 & 3 & 3 & 3 & 3 & 432\\
\hline
\end{tabular*}
\end{table}

\begin{pf}
We used an approach that mirrors
the primary decomposition of
binomial ideals \cite{ES}. Recall that the EM fixed point ideal equals
\begin{eqnarray*}
\mathcal F   & =&    \bigl\langle A\star\bigl(R\cdot B^T\bigr),
B\star\bigl(A^T\cdot R\bigr)   \bigr\rangle
\\
& =&    \Biggl\langle  a_{ik} \Biggl(\sum
_{l=1}^n r_{il}b_{kl}\Biggr) ,
  b_{kj}\Biggl(\sum_{l=1}^m
r_{lj} a_{lk}\Biggr)   \dvtx   k\in[r], i\in[m], j\in[n]
\Biggr\rangle.
\end{eqnarray*}
Any prime ideal containing $\mathcal F$ contains either $a_{ik}$ or
$\sum_{l=1}^n r_{il}b_{kl}$ for any $k\in[r]$, $i\in[m]$, and either
$b_{kj}$ or $\sum_{l=1}^m r_{lj} a_{lk}$ for any $k\in[r], j\in[n]$.
We enumerated all primes containing $\mathcal F$ according to the set
$S$ of unknowns $a_{ik}, b_{kj}$ they contain. There are $2^{24}$
subsets and the symmetry group acts on this power
set by replacing $A$ with $B^T$, permuting the rows of $A$,
the columns of $B$, and
the columns of $A$ and
the rows of $B$ simultaneously.
We picked one representative $S$ from each
orbit that is relevant, meaning that we excluded those orbits for which
some $p_{ij} = \sum_{k=1}^r a_{ik} b_{kj}$ lies in
the ideal $\langle S \rangle$.
For each relevant representative $S$, we computed the
\textit{cellular component}
$ \mathcal{F}_S = ( (\mathcal{F} + \langle S \rangle) \dvtx
(\prod S^c)^\infty) $,
where $S^c = \{a_{11}, \ldots, b_{34}\} \setminus S$.
Note that $\mathcal{F}_\varnothing= \mathcal{C} $ is the critical ideal.
We next minimalized our cellular decomposition
by removing all representatives $S$ such that
$ \mathcal{F}_T \subset\mathcal{F}_S$
for some representative $T$ in another orbit.
This led to a list of $76$
orbits, comprising 42,706 ideals
$\mathcal{F}_S$ in total.
For the representative $\mathcal{F}_S$, we computed the set
$\operatorname{Ass}(\mathcal{F}_S)$ of associated primes $P$.
By construction, the sets
$\operatorname{Ass}(\mathcal{F}_S)$ partition
the set of relevant primes of $\mathcal{F}$.
The block sizes $|\mathrm{Ass}(\mathcal{F}_S)|$ range from $1$ to $7$.
Up to symmetry, each prime is uniquely determined
by its attributes in Table~\ref{tableminimalprimes44}.
These are its set $S$, its degree and codimension,
and the ranks
$\operatorname{rA} = \operatorname{rank}(A)$,
$\operatorname{rB} = \operatorname{rank}(B)$,
$\operatorname{rR} = \operatorname{rank}(R)$,
$\operatorname{rP} = \operatorname{rank}(P)$ at a generic point.
Our list starts with the four primes from
coming from $S = \varnothing$. See
Example~\ref{exC443Appendix}.
In each case, the primality of the ideal was verified
using a linear elimination sequence as in \cite{GSS}, Proposition~23(b).
Proofs in {\tt Macaulay2} code are posted on our website.
\end{pf}

Below is the complete list of all $108$ classes
of prime ideals in Theorem~\ref{thmem44}. Three components are marked
with stars.
After the table, we discuss these components in Examples~\ref{exswapping},
\ref{exf32} and
\ref{exf18}.

We illustrate our census of relevant primes
for three sets $S$ that are especially interesting.

\begin{example} \label{exswapping}
Let $S = \{a_{11},a_{21},b_{11},b_{12}\}$.
The cellular component $\mathcal{F}_S$ is the ideal
generated by
$S, \operatorname{det}(R^{34}_{34}),
\operatorname{det}(R)$, and the entries of
the matrices
$B^{23} R^T  ,
B^{1} (  R^T )_{34},
R^T A_{23},
(  R^T  )^{34} A_1$.
In specifying submatrices, upper
indices refer to rows and lower indices refer to columns.
The ideal $\mathcal{F}_S$ is radical with $7$ associated primes,
to be discussed in order of their appearance in Table~\ref
{tableminimalprimes44}.
For instance, the prime (1) below has degree $245$.
The phrase ``Generated by'' is meant modulo~$\mathcal{F}_S$:
\begin{longlist}[(3$'$)]
\item[(1)] Generated by entries of $BR^T,A^TR$,
and $2 \times 2$-minors of $R$. This gives $60$ quadrics.
\item[(2)] Generated by entries of $A^T R,R^{34}$,
and $2 \times 2$-minors of $R,A^{12}_{23}$. This gives
$19$ quadrics.
\item[(2$'$)] Mirror image of (2) under swapping $A$ and $B^T$.
\item[(3)] Generated by entries of $A^T R$,
$2 \times 2$-minors of $A^{12}_{23},R^{34}$,
and $3 \times 3$-minors of $A$, $R^{123},R^{124}$.
This gives $29$ quadrics and $10$ cubics.
\item[(3$'$)] Mirror image of (3) under swapping $A$ and $B^T$.
\item[(4)] Generated by $2 \times 2$-minors
of $A_{23}$ and $B^{23}$. This gives $33$ quadrics and one quartic.
\item[(5)] Generated by entries of $R^{34}_{34}$,
$2 \times 2 $-minors of $R^{12}_{34}, R^{34}_{12}, A^{12}_{23}, B^{23}_{12}$,
and $3 \times 3$-minors of $R$. This gives
$20$ quadrics and $4$ cubics.
\end{longlist}
These primes have the following meaning for
the EM algorithm:
\begin{longlist}[(5)]
\item[(1)] The fixed points $P = \phi(A, R, B)$ given by this
prime ideal are those critical points for the likelihood function $\ell
_U$ for which
the parameters $a_{11}, a_{21}, \break b_{11}, b_{21}$ happen to be $0$.
%
\item[(2)] The fixed points $P = \phi(A, R, B)$ given by this
prime ideal have the last two rows of $P$ fixed and equal to the last
two rows of the data matrix $U$ (divided by the
sample size $u_{++}$). Therefore, the points coming from this ideal are
the maximum likelihood estimates with these eight entries fixed and
which factor so that $a_{11}, a_{21}, b_{11}, b_{21}$ are $0$.
\item[(3)] Since the $3\times3$ minors of $A$ lie in this ideal,
we have rank$(P) \leq2$. Therefore, these fixed points give an
MLE of rank $2$.
This component is the restriction
to $V(\mathcal{F}_S)$ of the generic behavior on the singular locus
of $\mathcal{V}$.
\item[(4)] On this component, the duality relation in (\ref
{eqduality}) fails
since $\operatorname{rank}(P) = 2 $ but $\operatorname{rank}(R) = 3$.
\item[(5)] The fixed points $P = \phi(A, R, B)$ given by this ideal
have the four entries in the last 2 rows and last 2 columns of $P$
fixed and equal to the corresponding entries in $U$ (divided by
$u_{++}$). Therefore, the points coming from this ideal are maximum
likelihood estimates with those four entries fixed, and
parameters $a_{11}, a_{21}, b_{11}, b_{21}$ being $0$.
\end{longlist}
\end{example}

\begin{example} \label{exf32}
Let $S = \{a_{11},a_{21},a_{32},a_{43}, b_{11},b_{22},b_{33} \}$.
The ideal $\mathcal{F}_S$ has codimension $21$, degree $3087$,
and is generated modulo $\langle\mathcal{S} \rangle$
by $20 $ quadrics and two cubics.
To show that $\mathcal{F}_S$ is prime, we use the elimination
method of \cite{GSS}, Proposition 23(b), with the
variable $x_1$ taken successively to be
$ r_{44}$, $r_{43}$, $r_{34}$, $a_{13}$, $r_{21}$, $r_{12}$, $r_{14}$, $r_{33}$, $b_{21}$,
$a_{31}$, $r_{41}$, $a_{21}$, $a_{32}$.

The last elimination ideal is
generated by an irreducible polynomial
of degree~$9$, thus proving primality of $\mathcal{F}_S$.

If we add the relation $P = AB $ to $ \mathcal{F_S}$
and thereafter eliminate $\{A,B,R\}$, we obtain
a prime ideal in $\mathbb{Q}[P]$. That prime ideal
has height one over the
determinantal ideal $\langle \operatorname{det}(P) \rangle$.
Any such prime gives a candidate for a component
in the boundary of our model $\mathcal{M}$.
By matching the set $S$ with the combinatorial analysis in
Section~\ref{sec3}, we see that
Figure~\ref{figcriticalconfigurations}(b) corresponds to $V(S)$.
Hence, by Corollary~\ref{corollaryboundarypointsforrankthreematrices},
this component does in fact contribute to
the boundary $\partial\mathcal{M}$.
This is a special case of Theorem~\ref{thmalgebraicboundary} below;
see equation (\ref{factorizationsofthesecondtype}) in
Example~\ref{exex52}.

This component is the most important one for EM.
It represents the typical behavior when the output of the EM algorithm
is not critical.
In particular, the duality relation (\ref{eqduality}) fails
in the most dramatic form because $\operatorname{rank}(R) = 4$.
As seen in Table~\ref{tablenotcritical},
this failure is still rare $(4.4 \%)$ for $m=n=4$.
For larger matrix sizes, however,
the noncritical behavior occurs with overwhelming~probability.
\end{example}

\begin{example} \label{exf18}
Let $S = \{a_{11},a_{22},a_{33}, b_{11},b_{22},b_{33} \}$.
The computation for the ideal $\mathcal{F}_S$ was the hardest
among all cellular components.
It was found to be radical, with two associated primes
of codimension $21$. The first prime has the largest degree,
namely $9016$, among all entries in Table~\ref{tableminimalprimes44}.
In contrast to Example~\ref{exf32},
the set $S$ cannot contribute to $\partial\mathcal{M}$.
Indeed, for both primes, the elimination ideal in $\mathbb{Q}[P]$
is $\langle\operatorname{det}(P) \rangle$. The degree $9016$ ideal
is the only prime in Table~\ref{tableminimalprimes44}
that has $\operatorname{rank}(R) = 4$ but does not map to the boundary of
the model $\mathcal{M}$.
Starting the EM algorithm
with zero parameters in $S$ generally leads
to the correct MLE.
\end{example}

\section{Algebraic boundaries}\label{sec5}

In Section~\ref{sec3}, we studied the real algebraic geometry
of the mixture model $\mathcal{M}$ for rank three.
In this section, we also fix $r=3$ and focus on the algebraic boundary
of our model.
Our main result in this section is the characterization of its
irreducible components.

\begin{theorem}\label{thmalgebraicboundary}
The algebraic boundary $ \overline{\partial\mathcal{M}} $
is a pure-dimensional reducible variety in $\PP^{mn-1}$. All
irreducible components have dimension $3m+3n-11$ and their number equals
\[
mn + \frac{m(m-1)(m-2)(m+n-6)n(n-1)(n-2)}{4}. %
\]
Besides the $mn$ components $\{ p_{ij} = 0\}$ that come
from $ \partial\Delta_{mn-1}$ there are:
\begin{longlist}[(a)]
\item[(a)]
$36 {m\choose3}{n\choose4}$ components
parametrized by $P = AB$, where
$A$ has three zeros in distinct rows and columns, and
$B$ has four zeros in three rows and distinct columns.
\item[(b)]
$36 {m\choose4} {n\choose3}$ components
parametrized by $P = AB$, where
$A$ has four zeros in three columns and distinct rows,
and $B$ has three zeros in distinct rows and columns.
\end{longlist}
\end{theorem}

This result takes the following specific form in the
first nontrivial case:

\begin{example} \label{exex52}
For $m=n=4$,
the algebraic boundary of
our model $\mathcal{M}$ has $16$ irreducible components $\{p_{ij}=0\}$,
$144$ irreducible components corresponding to factorizations like
%
\begin{eqnarray}\label{factorizationsofthefirsttype}
&& \lleft[\matrix{ p_{11} & p_{12} &
p_{13} & p_{14}
\cr
p_{21} & p_{22} & p_{23} & p_{24}
\cr
p_{31} & p_{32} & p_{33} & p_{34}
\cr
p_{41} & p_{42} & p_{43} & p_{44} }
\rright]
\nonumber\\[-8pt]\\[-8pt]\nonumber
&&\qquad   =   \lleft[\matrix{ 0 & a_{12} & a_{13}
\cr
a_{21} & 0 & a_{23}
\cr
a_{31} & a_{32} & 0
\cr
a_{41} & a_{42} & a_{43} } \rright] \cdot
\lleft[\matrix{ 0 & 0 & b_{13} & b_{14}
\cr
b_{21} & b_{22} & 0 & b_{24}
\cr
b_{31} & b_{32} & b_{33} & 0 } \rright],
\end{eqnarray}
and $144$ irreducible components
that are transpose to those in (\ref
{factorizationsofthefirsttype}), that is,
%
\begin{eqnarray}
\label{factorizationsofthesecondtype}
&& \lleft[\matrix{ p_{11} & p_{12} &
p_{13} & p_{14}
\cr
p_{21} & p_{22} & p_{23} & p_{24}
\cr
p_{31} & p_{32} & p_{33} & p_{34}
\cr
p_{41} & p_{42} & p_{43} & p_{44} }
\rright]
\nonumber\\[-8pt]\\[-8pt]\nonumber
&&\qquad  =   \lleft[\matrix{ 0 & a_{12} & a_{13}
\cr
0 & a_{22} & a_{23}
\cr
a_{31} & 0 & a_{33}
\cr
a_{41} & a_{42} & 0 } \rright] \cdot\lleft[\matrix{ 0 &
b_{12} & b_{13} & b_{14}
\cr
b_{21} & 0 & b_{23} & b_{24}
\cr
b_{31} & b_{32} & 0 & b_{34} } \rright].
\end{eqnarray}
The prime ideal of each component is generated by
the determinant and four polynomials of degree six.
These are the maximal minors of a $ 4 \times5$-matrix.
For the component~(\ref{factorizationsofthesecondtype}),
this can be chosen~as
%
\begin{eqnarray}\label{Concamatrix}
&& \lleft[\matrix{  p_{11} & p_{12} & p_{13} & p_{14} &0
\cr
p_{21} & p_{22} & p_{23} & p_{24}&0
\cr p_{31} & p_{32} & p_{33} & p_{34} &p_{33} (p_{11}p_{22}-p_{12}p_{21})
\cr
p_{41} & p_{42} & p_{43} & p_{44}  & p_{41} (p_{12} p_{23} - p_{13}p_{22} ) +p_{43} (p_{11}p_{22}-p_{12}p_{21})} \rright].\hspace*{-10pt}\nonumber
\\
\end{eqnarray}
This matrix representation was suggested to us
by Aldo Conca and Matteo Varbaro. 
\end{example}

We begin by resolving a problem that was stated
in~\cite{HRS}, Section~5, and~\cite{HS}, Example~2.13.

\begin{proposition} \label{prop633}
The ML degree of each variety
{(\ref{factorizationsofthefirsttype})}
in the algebraic boundary $\overline{\partial\mathcal{M}}$ is $ 633$.
\end{proposition}

Proposition~\ref{prop633} is a first step towards
deriving an exact representation of the MLE function
$U \mapsto\widehat P$
for our model $ \mathcal{M} ={}$\includegraphics{1282i02.eps}.
As highlighted in Table~\ref{tablenotcritical}, the MLE $\widehat P$ typically
lies on the boundary $\partial\mathcal{M}$. We now know that this
boundary has
$304 = 16+144+144$ strata $X_1, X_2, \ldots, X_{304}$.
If $\widehat P$ lies on exactly one of the strata (\ref
{factorizationsofthefirsttype}) or (\ref
{factorizationsofthesecondtype}),
then we can expect the coordinates of $\widehat P$ to be
algebraic numbers of degree $633$ over the rationals $\mathbb{Q}$.
This is the content of Proposition~\ref{prop633}.
By \cite{HRS}, Theorem 1.1,
the degree of $\widehat P$ over $\mathbb{Q}$ is only $191$ if
$\widehat P$ happens to lie
in the interior of $\mathcal{M}$.

In order to complete the exact analysis of MLE for the $4 \times4$-model,
we also need to determine which intersections $X_{i_1} \cap\cdots\cap X_{i_s}$
are nonempty on $\partial\mathcal{M}$. For each
such nonempty stratum, we would then need to compute its ML degree.
This is a challenge left for a future project.

\begin{pf*}{Proof of Theorem~\ref{thmalgebraicboundary}}
By Corollary~\ref{corollaryboundarypointsforrankthreematrices},
an $m\times n$ matrix $P$ of rank $3$ without zero entries lies on
$\partial\mathcal{M}^{m \times n}_3$ if and only if all
triangles $\Delta$ with $\mathcal{B} \subseteq\Delta\subseteq
\mathcal{A}$ contain an edge of $\mathcal{B}$ on one of its edges and
a vertex of $\mathcal{B}$ on all other edges, or one of its vertices
coincides with a vertex of $\mathcal{A}$ and all other edges contain a
vertex of $\mathcal{B}$. We will write down these conditions algebraically.

The columns of $A$ correspond to the vertices of $\Delta$, and
the columns of $B$ correspond to the convex combinations of the
vertices of $\Delta$ that give
the columns of $P=AB$. If a vertex of $\Delta$ and a vertex of
$\mathcal{A}$ coincide, then the corresponding column of
$A$ has two $0$'s. Otherwise the corresponding column of $A$ has one
$0$. If a vertex of $\mathcal{B}$ lies on an edge of $\Delta$, then
one entry of $B$ is zero.

We can freely permute the columns of the left
$m \times3$ matrix $A$ of a factoriza\-tion---this corresponds to
permuting the rows of the corresponding right $ 3 \times n$
matrix $B$. Thus we can assume that the first column contains two $0$'s
and/or the rest of the $0$'s appear in the increasing order.

In the first case, there are ${m\choose3}$ possibilities for choosing
the three rows of $A$ containing $0$'s, there are $3$ choices for the row
of $B$ with two $0$'s, ${n\choose2}$ possibilities for choosing the
positions for the two $0$'s,
and $(n-2)(n-3)$ possibilities for choosing the positions of the $0$'s
in the other two rows of $B$.
In the second case, there are ${m\choose2}$ possibilities for choosing
the $0$'s in the first column of $A$ and ${m-2\choose2}$ choices for
the positions of the $0$'s in other columns. There\vspace*{1pt} are
${n\choose3}$ choices for the columns of $B$ containing $0$'s and $3!$
choices for the positions of the $0$'s in these columns.
\end{pf*}

The prime ideal in (\ref{Concamatrix}) can be found and
verified by direct computation, for example,~by using the software {\tt
Macaulay2} \cite{M2}.
For general values of $m$ and $n$,
the prime ideal of an irreducible boundary component
is generated by quartics and sextics that generalize those in
Example~\ref{exex52}.
The following theorem was stated as a conjecture in the original
December 2013 version of this paper.
That conjecture was proved in April 2014 by
Eggermont, Horobe\c{t} and Kubjas \cite{EHK}.

\begin{theorem}[(Eggermont, Horobe\c{t} and Kubjas)] \label{conjmingens}
Let $m \geq4, n \geq3$ and consider the irreducible component
of $\overline{ \partial\mathcal{M}}$ in
Theorem~\ref{thmalgebraicboundary}\textup{(b)}.
The prime ideal of this component is minimally generated
by ${m\choose4}{n\choose4}$ quartics, namely the $4\times4$-minors
of~$P$, and by ${n\choose3}$ sextics that are indexed by subsets $\{
i,j,k\}$ of $\{1,2,\ldots,n\}$.
These form a Gr\"obner basis with respect to graded reverse
lexicographic order. The sextic indexed
by $\{i,j,k\}$ is homogeneous of degree
$e_1+e_2+e_3+e_i+e_j+e_k$ in the column
grading by $\mathbb{Z}^n$ and homogeneous of degree
$2e_1+2e_2+e_3+e_4$ in the row grading
by $\mathbb{Z}^m$.
\end{theorem}

The row and column gradings of the polynomial ring $\mathbb{Q}[P]$
are given by $\operatorname{deg}(p_{ij}) = e_i$
and $\operatorname{deg}(p_{ij}) = e_j$
where $e_i$ and $e_j$ are unit vectors in $\mathbb{Z}^m$ and
$\mathbb{Z}^n$, respectively.

\begin{example}
If $m = 5$ and $n = 6$, then our component is given by the parametrization
\begin{eqnarray*}
&& \lleft[\matrix{ p_{11} & p_{12} & p_{13} & p_{14} & p_{15} & p_{16}
\cr
p_{21} & p_{22} & p_{23} & p_{24} &
p_{25} & p_{26}
\cr
p_{31} & p_{32} & p_{33} & p_{34} &
p_{35} & p_{36}
\cr
p_{41} & p_{42} & p_{43} & p_{44} &
p_{45} & p_{46}
\cr
p_{51} & p_{52} & p_{53} & p_{54} &
p_{55} & p_{56} } \rright]
\\
&&\qquad = \lleft[\matrix{ 0 &
a_{12} & a_{13}
\cr
0 & a_{22} & a_{23}
\cr
a_{31} & 0 & a_{33}
\cr
a_{41} & a_{42} & 0
\cr
a_{51} & a_{52} & a_{53} } \rright] \cdot
\lleft[\matrix{ 0 & b_{12} & b_{13} & b_{14} &
b_{15} & b_{16}
\cr
b_{21} & 0 & b_{23} & b_{24} & b_{25} &
b_{26}
\cr
b_{31} & b_{32} & 0 & b_{34} & b_{35} &
b_{36} } \rright].
\end{eqnarray*}
This parametrized variety has codimension $7$ and degree $735$ in
$\mathbb{P}^{29}$.
Its prime ideal is generated by $75$ quartics and $20$ sextics of the
desired row and column degrees.
\end{example}

The base case for Theorem~\ref{conjmingens} is
the case of $ 4 \times3$-matrices,
even though $\partial\mathcal{M} = \mathcal{M} \cap\Delta_{11}$ is trivial
in this case.

The corresponding ideal is principal,
and it is generated by the determinant of the $4 \times4$-matrix that is
obtained by deleting the fourth column of
(\ref{Concamatrix}).

The sextics in Theorem~\ref{conjmingens}
can be constructed as follows. Start with the polynomial
\[
\bigl( \bigl((a_1 \wedge a_2)\vee b_{1}
\bigr) \wedge a_3 \bigr) \vee\bigl( \bigl( (a_1 \wedge
a_2) \vee b_{2}\bigr) \wedge a_4 \bigr) \vee
b_{3} %
\]
that is given in (\ref{eqsixthree}). Now multiply this
with the $3 \times3$-minor $b_i \vee b_j \vee b_k$ of $B$.
The result has bidegree $(6,6)$ in the parameters
$(A,B)$ and can be written as a sextic
in $P = AB$. By construction, it vanishes on our component of
$\overline{\partial\mathcal{M}}$, and it has the
asserted degrees in the row and column gradings on $\mathbb{Q}[P]$.
This is the generator of the prime ideal referred to in Theorem~\ref
{conjmingens}.

Theorem~\ref{thmalgebraicboundary}
characterizes the probability distributions
in the algebraic boundary of our model,
but not those in the topological boundary, since
the following inclusion is strict:
%
\begin{equation}
\label{eqboundaryinclusion} \partial\mathcal{M}    \subset
\overline{\partial\mathcal
{M}} \cap\Delta_{mn-1}.
\end{equation}
In fact, the left-hand side is much smaller than the right-hand side.

To quantify the discrepancy between the two semialgebraic sets in
(\ref{eqboundaryinclusion}), we conducted the following experiment
in the smallest interesting case $m=n=4$.
We sampled from the component
(\ref{factorizationsofthefirsttype})
of $\overline{\partial\mathcal{M}} \cap\Delta_{15} $
by generating random rational numbers for
the nine
parameters $a_{ij}$ and the eight parameters
$b_{ij}$. This was done using the built-in
\texttt{Macaulay2} function \texttt{random(QQ)}\hspace*{-2pt}.
The resulting matrix in
$\overline{\partial\mathcal{M}} \cap\Delta_{15} $ was obtained
by dividing by the sum of the entries.
For each matrix, we tested whether it lies in
$\partial\mathcal{M}$. This was done using the criterion
in Corollary~\ref{cortopoboundary}.
The answer was affirmative only in $257$ cases out of $5000$ samples.
This suggests that $\partial\mathcal{M}$ occupies only a tiny part of the
set $ \overline{\partial\mathcal{M}} \cap\Delta_{15}$.
One of those rare points in the topological boundary is the matrix
%
\begin{eqnarray}
\label{eqnice44matrix} \lleft[\matrix{ 6 & 13 & 3 & 1
\cr
4 & 16 & 6 & 2
\cr
12 & 4 & 8 & 12
\cr
5 & 9 & 10 & 9
\cr
} \rright]   =   \lleft[\matrix{ 0 & 1 & 3
\cr
1 & 0 & 4
\cr
4 & 4 & 0
\cr
4 & 1 & 2 } \rright] \cdot\lleft[\matrix{ 0 & 0 & 2 & 2
\cr
3 & 1 & 0 & 1
\cr
1 & 4 & 1 & 0 } \rright].
\end{eqnarray}
To construct this particular example, the parameters $a_{ij}$ and
$b_{ij}$ were selected uniformly at random among the integers between
$1$ and $4$.
Only $1$ out of $1000$ samples gave a matrix lying in $\partial
\mathcal{M}$.
In fact, this matrix lies on precisely one of
the $304$ strata in the topological boundary
$\partial\mathcal{M}$.

We close this paper with a quantifier-free semialgebraic formula for
the topological boundary.

\begin{corollary} \label{cortopoboundary}
An $m \times n$-matrix $P $
lies on the topological boundary $\partial\mathcal{M}$ if and only if:
\begin{itemize}
\item the conditions of Theorem~\ref
{theoremsemialgebraicdescription} are satisfied, and

\item$P$ contains a zero, or $\rank(P)=3$ and for each $i,j,i',j'$
for which the conditions of Theorem~\ref
{theoremsemialgebraicdescription} are satisfied there exist $k,l$
such that {(\ref{eqsixthree})} $\cdot$ (\ref{eqsixthree})~$[k   \leftrightarrow  l]  =  0  $.
\end{itemize}
\end{corollary}

This corollary will be derived (in Appendix~\ref{appproofs}) from our
results in Section~\ref{sec3}.

\begin{appendix}
\section{Proofs}\label{appproofs}

This appendix furnishes the
proofs for all lemmas, propositions and corollaries
in this paper.

\begin{pf*}{Proof of Lemma~\ref{lemmaEMfixed}}
$(3) \Rightarrow(2)$:  If $(A, \Lambda, B)$ remains fixed after one
completion of the E-step and the M-step, then it will remain fixed
after any number of rounds of the E-step and the M-step.

$(2)\Rightarrow(3)$: By the proof of \cite{ASCB}, Theorem~1.15,
the log-likelihood function $\ell_U$ grows strictly after the
completion of an E-step and an M-step unless the parameters $(A,
\Lambda, B)$ stay fixed, in which case $\ell_U$ also stays fixed.
Thus, the only way to start with $(A, \Lambda, B)$ and to end with it
is for $(A, \Lambda, B)$ to stay fixed after every completion of an
E-step and an M-step.

$(2)\Rightarrow(1)$: If $(A, \Lambda, B)$ is the limit point
of EM when we start with it, then
it is in the set of all limit points.
This argument is reversible, and so we
also get $(1) \Rightarrow(2),(3)$.
\end{pf*}


\begin{pf*}{Proof of Lemma~\ref
{lemmageometricdescriptionofinteriorpoints}}
The if-direction of the first sentence follows from the following two
observations:
1.~The function that takes $P \in\R^{m\times n}_{\geq0}$ to the
vertices of $\mathcal{B}$ is continuous on all $m\times n$ nonnegative
matrices without zero columns, since the vertices of $\mathcal{B}$ are
of the form $P^j/P_{+j}$, where $P_{+j}$ denotes the $j$th column sum
of $P$. 2.~The function that takes $P \in\R^{m\times n}_{\geq0}$ to
the vertices of $\mathcal{A}$ is continuous on all $m\times n$
nonnegative matrices of rank $r$, since the vertices of $\mathcal{A}$
are solutions to a system of linear equations in the entries of $P$.

For the only-if-direction of the first sentence assume that $P$ lies in
the interior of~$\mathcal{M}_r$.
Each $P'$ of rank $r$ in a small neighborhood of $P$ has nonnegative
rank $r$. We can choose $P'$ in this neighborhood such that the columns
of $P'$ are in $\operatorname{span}(P)$ and $\cone(P')=t \cdot\cone(P)$
for some $t>1$.
Since $P'$ has nonnegative rank $r$, there exists an $(r-1)$-simplex
$\Delta$ such that $\mathcal{B}'\subseteq\Delta'\subseteq\mathcal
{A}$. Hence, $\mathcal{B}$ is contained in the interior of $\Delta'$.
Finally, the second sentence is the contrapositive of the first sentence.
\end{pf*}

\begin{pf*}{Proof of Corollary~\ref
{corollaryboundarypointsforrankthreematrices}}
The if-direction follows from the second sentence of Lemma~\ref
{lemmageometricdescriptionofinteriorpoints}. For the
only-if-direction, assume that $P\in\partial\mathcal{M}_3$ and it
contains no zeros.
We first consider the case $\operatorname{rank}(P) = 3$.
By Lemma~\ref{lemmageometricdescriptionofinteriorpoints}, every
triangle $\Delta$ with $\mathcal{B} \subseteq\Delta\subseteq
\mathcal{A}$ contains a vertex of $\mathcal{B}$ on its boundary.
Moreover, by the discussion above, every edge of $\Delta$ contains a
vertex of $\mathcal{B}$, and
(a) or (b) must hold. It remains to be seen that $\operatorname{rank}(P) \leq2
$ is impossible
on the strictly positive part of the boundary of $\mathcal{M}_3$.
Indeed, for
every rank $3$ matrix $P'$ in a neighborhood of $P$,
the polygons $\mathcal{A}', \mathcal{B}'$ have the
property that $\mathcal{B}'$ is very close to a line segment
strictly contained in the interior of $\mathcal{A}'$.
Hence, $t\mathcal{B}' \subseteq\Delta\subseteq\mathcal{A}'$
for some triangle $\Delta$.
Thus, $P' \notin\partial{\mathcal{M}}_3$ and, therefore,
$P \notin\partial{\mathcal{M}}_3$.
\end{pf*}

\begin{pf*}{Proof of Corollary~\ref
{corollarygeometricconditionsforamatrixtohavenonnegativerank3}}
The if-direction is immediate. For the only-if direction, consider any
$P \in\mathcal{M}_3$.
If $P\in\partial\mathcal{M}_3$, then the only-if-direction follows
from Corollary~\ref
{corollaryboundarypointsforrankthreematrices}. If $P$ lies in
the interior of $\mathcal{M}_3$, then let $t$ be maximal such
that $t\mathcal{B} \subseteq\Delta' \subseteq\mathcal{A}$ for some
triangle $\Delta'$. Then either a vertex of $\Delta'$ coincides with
a vertex of $\mathcal{A}$
or an edge of $\Delta'$ contains an edge of $t\mathcal{B}$. In the
first case, we
take $\Delta=\Delta'$. In the second case, we take $\Delta=\frac
{1}{t}\Delta'$.
In the first case, a vertex of $\Delta$ coincides with a vertex of
$\mathcal{A}$,
and in the second case, an edge of $\Delta$ contains an edge of
$\mathcal{B}$.
\end{pf*}

\begin{pf*}{Proof of Corollary~\ref{corollaryrationalfactorizations}}
If $P$ has a nonnegative factorization of size $3$, then it has one
that corresponds to a geometric condition in Corollary~\ref
{corollarygeometricconditionsforamatrixtohavenonnegativerank3}.
The left matrix in the factorization can be taken to be equal to the
vertices of the nested triangle, which can be expressed as rational
functions in the entries of $P$. Finally, the right matrix is obtained
from solving a system of linear equations with rational coefficients,
hence its entries are again rational functions in the entries of $P$.
\end{pf*}

\begin{pf*}{Proof of Proposition~\ref{propquivercycle}}
Consider the sequence of linear maps
%
\begin{equation}
\label{eqrnmr} \R^r    \buildrel{B^T}\over{
\longrightarrow}   \R^n    \buildrel{R}\over{\longrightarrow}
\R^m    \buildrel{A^T}\over{\longrightarrow}
\R^r.
\end{equation}
The ideal $\mathcal{C}$ says that the two compositions
are zero. It defines a \textit{variety of complexes}
\cite{CCA}, Example 17.8. The irreducible components
of that variety correspond to \textit{irreducible rank arrays} \cite{CCA},
Section~17.1,
that fit inside the format (\ref{eqrnmr}) and are maximal with this property.
By \cite{CCA}, Theorem 17.23, the quiver loci for these
rank arrays are irreducible
and their prime ideals are the ones we listed.
These can also be described by \textit{lacing diagrams} \cite{CCA},
Proposition~17.9.

The proof that $\mathcal{C}$ is radical was suggested to us by
Allen Knutson. Consider the Zelevinski map
\cite{CCA}, Section~17.2,
that sends the triple $(A^T, R, B^T)$ to the $(r+m+n+r) \times
(r+m+n+r) $ matrix
\begin{eqnarray*}
\lleft[\matrix{ 0 & 0 & B^T & 1
\cr
0 & R & 1 & 0
\cr
A^T & 1 & 0 & 0
\cr
1 & 0 & 0 & 0} \rright].
\end{eqnarray*}
Next, apply
the map that takes this matrix to the big cell (the open Borel orbit)
in the flag variety $GL(2r+m+n)/\operatorname{parabolic}(r, m, n, r)$
corresponding to the given block structure.

Our scheme is identified with the intersection of two Borel invariant
Schubert varieties. The first Schubert variety encodes the fact that
there are $0$'s in the North West block, and the
$(r+n+m)\times(r+m)$ North West rectangle has rank $\leq m$. The
second Schubert variety corresponds to the $(r+n)\times(r+m+n)$
North West rectangle having rank $\leq n$.
The intersection of Schubert varieties is reduced by \cite{BK},
Section~2.3.3, page~74.
Hence, the original scheme is reduced,
and we conclude that $\mathcal{C}$ is the
radical ideal defining the variety of complexes (\ref{eqrnmr}).
\end{pf*}

The following relations hold
for $P = AB$ and $R$ on the variety of critical points~$V(\mathcal{C})$:
%
\begin{equation}
\label{eqduality} P^T \cdot R   =   0 \quad\mbox{and}\quad R \cdot
P^T   =   0 .
\end{equation}
These bilinear equations characterize the
\textit{conormal variety} associated to a pair
of determinantal varieties. Suppose
$P$ is fixed and has rank $r$. Then $P$ is a nonsingular point in
$\mathcal{V}$, and (\ref{eqduality})
is the system of linear equations that
characterizes normal vectors $R$ to
$\mathcal{V}$ at~$P$.

\begin{example} \label{exC443Appendix}
Let $m=n=4$ and $r=3$.
Then $\mathcal{C}$ has four minimal primes,
corresponding to the four columns in the table below.
These are the ranks for generic points on that prime:
\begin{eqnarray*}
\operatorname{rank}(A) &=& 0,\qquad  \operatorname{rank}(A) = 1,\qquad  \operatorname{rank}(A) = 2,\qquad \operatorname{rank}(A) = 3,
\\
\operatorname{rank}(R) &=& 4,\qquad  \operatorname{rank}(R) = 3,\qquad  \operatorname{rank}(R) = 2,\qquad  \operatorname{rank}(R) = 1,
\\
\operatorname{rank}(B) &=& 0,\qquad  \operatorname{rank}(B) = 1,\qquad \operatorname{rank}(B) = 2\qquad  \operatorname{rank}(B) = 3.
\end{eqnarray*}
The lacing diagrams that describe these four irreducible components are
as follows:
\[
\mbox{
\includegraphics{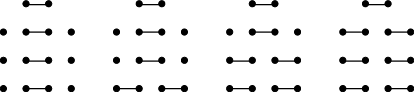}
}
\]
For instance, the second minimal prime is
$  \mathcal{C} + \langle 2 \times 2 $-minors of
$A$ and $B \rangle
+ \langle \operatorname{det}(R) \rangle
$.

Note that the ranks of $P = AB$
and $R$ are complementary on each irreducible component. They add up to~$4$.
The last component gives the behavior of EM for random data:
the MLE $P$ has rank~$3$,
it is a nonsingular point on the determinantal hypersurface
$\mathcal{V}$, and the normal space
at $P$ is spanned by the rank $1$ matrix $R$.
This is the duality (\ref{eqduality}).
The third component expresses the
behavior on the singular locus of $\mathcal{V}$.
Here, the typical rank of both $P$ and $R$ is $2$.
\end{example}

\begin{pf*}{Proof of Proposition~\ref{prop633}}
Let $f,g_1,g_2,g_3,g_4$ denote the
$4\times 4$ minors of the matrix (\ref{Concamatrix}), where $\operatorname{deg}(f) = 4$ and
$\operatorname{deg}(g_i) = 6$. Fix $i \in\{1,2,3,4\}$, select
$u_{11},\ldots,u_{44} \in\mathbb{N}$ randomly, and set
%
\begin{eqnarray}
L  =  \lleft[\matrix{ u_{11} & u_{12} & \cdots&
u_{44}
\cr
p_{11} & p_{12} & \cdots& p_{44}
\cr
p_{11} \, \partial f   / \partial p_{11} & p_{12}
\,  \partial f   / \partial p_{12} & \cdots& p_{44}
\,\partial f   / \partial p_{44}
\cr
p_{11} \, \partial g_i / \partial p_{11} &
p_{12} \,  \partial g_i / \partial p_{12} &
\cdots& p_{44} \,  \partial g_i / \partial
p_{44} } \rright].
\end{eqnarray}
This is a $4 \times 16$ matrix.
Let $\lambda_1$ and $ \lambda_2$ be new unknowns and consider the
row vector
%
\begin{equation}
\label{eqLL} \lleft[\matrix{   1 & -u_+ & \lambda_1 &
\lambda_2   } \rright] \cdot L.
\end{equation}
Inside the polynomial ring $\mathbb{Q}[p_{ij}, \lambda_k]$
with $20$ unknowns,
let $I$ denote the ideal generated by
$\{f,g_1,g_2,g_3,g_4\}$, the $16$ entries of (\ref{eqLL}),
and the linear polynomial
$p_{11} + p_{12} + \cdots+ p_{44} - 1$.
Thus, $I$ is the ideal of \textit{Lagrange likelihood equations}
introduced in \cite{GR}, Definition~2.
Gross and Rodriguez~\cite{GR}, Proposition~3,
showed that $I$ is a $0$-dimensional radical ideal,
and its number of roots is the ML degree of the variety
$V(f,g_1,g_2,g_3,g_4)$. We computed a
Gr\"obner bases for $I$ using the
computer algebra software \texttt{Magma}~\cite{magma}.
This computation reveals that $V(I)$ consists of $633$ points over
$\mathbb{C}$.
\end{pf*}

\begin{pf*}{Proof of Corollary~\ref{cortopoboundary}}
A matrix $P$ has nonnegative rank $3$ if and only if the conditions of
Theorem~\ref{theoremsemialgebraicdescription} are satisfied. Assume
$\rank(P)=3$. By Corollary~\ref
{corollaryboundarypointsforrankthreematrices}, a matrix $P\in
\mathcal{M}$ lies on the boundary of $\mathcal{M}$ if and only if it
contains a zero or for any triangle $\Delta$ with $\mathcal{B}
\subseteq\Delta\subseteq\mathcal{A}$ every edge of $\Delta$
contains a vertex of $\mathcal{B}$ and (a) or (b) holds. By proof of
Theorem~\ref{theoremsemialgebraicdescription}, the latter implies
that for each $i,j,i',j'$ for which the conditions of Theorem~\ref
{theoremsemialgebraicdescription} are satisfied there exist $k,l$
such that {(\ref{eqsixthree})} $\cdot$
{(\ref{eqsixthree})}\,$[k   \leftrightarrow  l]=0$. On the other
hand, if $P$ lies in the interior of $\mathcal{M}^{m \times n}_3$,
then by the proof of Corollary~\ref
{corollarygeometricconditionsforamatrixtohavenonnegativerank3},
the following holds:
there exists a triangle $\Delta$ with a vertex coinciding
with a vertex of $\mathcal{A}$ or
with an edge containing an edge of $\mathcal{B}$,
and such that the inequality {(\ref{eqsixthree})} $\cdot$
{(\ref{eqsixthree})}\,$[k   \leftrightarrow  l]>0$
holds for all $k,l$ in the corresponding semialgebraic condition.
\end{pf*}\vspace*{-8pt}

\section{Basic concepts in algebraic geometry}\label{appbasics}
This appendix gives a synopsis of basic
concepts from algebraic geometry that are used in this paper.
It furnishes the language to speak about
solutions to polynomial equations in many variables.

\subsection{Ideals and varieties}
Let $R = K[x_1,\ldots,x_n]$ be
the ring of polynomials in $n$ variables with coefficients in
a subfield $K$ of the real numbers $\mathbb{R}$,
usually the rational numbers $K = \mathbb{Q}$.
The concept of an ideal $I$ in the ring $R$ is similar to the concept
of a normal subgroup
in a group.

\begin{definition} A subset $I\subseteq R$ is an \textit{ideal} in $R$ if
$I$ is an subgroup of $R$ under addition, and
for every $f\in I$ and every $g\in R$ we have
$ fg \in I$.
Equivalently, an ideal $I$ is closed under taking linear combinations
with coefficients in the ring~$R$.
\end{definition}

Let $T $ be any set of polynomials in $R$. Their set of zeros
is called the \textit{variety} of~$T$. It is denoted
\[
V(T)   =    \bigl\{P\in\mathbb C^n \dvtx  f(P) = 0\mbox{ for all
}f\in T \bigr\}.
\]
Here, we allow zeros with complex coordinates. This greatly simplifies
the study of $V(T)$ because $\mathbb{C}$ is algebraically closed,
\textit{that is}, every nonconstant polynomial has a zero.

The \textit{ideal generated by} $T$, denoted by $\langle T\rangle$,
is the smallest ideal in $R$ containing~$T$. Note that
\[
V(T)   =   V\bigl(\langle T\rangle\bigr). %
\]
In computational algebra, it is often desirable to replace
the given set $T$ by a \textit{Gr\"obner basis} of $\langle T \rangle$.
This allows us to test ideal membership and to determine
geometric properties of the variety $V(T)$.

\begin{definition} A subset $X\subseteq\mathbb C^n$ is a \textit{variety}
if $X=V(T)$ for some subset $T \subseteq R$.
\end{definition}

Hilbert's basis theorem ensures that here $T$ can always be chosen
to be a finite set of polynomials.
The concept of variety allows us to define a new topology on~$\mathbb C^n$.
It is coarser than the usual topology.

\begin{definition} We define the \textit{Zariski topology} on $\mathbb
C^n$ by taking closed sets to be the varieties and open sets to be the
complements of varieties.
This topology depends on the choice of $K$.
\end{definition}

If $K = \mathbb{Q,}$ then
$ X = \{+\sqrt{2}, -\sqrt{2} \}$ is a variety
(for $n=1$) but
$Y = \{+\sqrt{2} \}$ is not a variety.
Indeed, $X = \overline{Y}$ is the \textit{Zariski closure} of $Y$,
\textit{that is}, it is the smallest variety containing $Y$,
because the minimal polynomial of $\sqrt2$ over $\mathbb Q$ is $x^2 - 2$.
Likewise, the set of $1618$ points in Example~\ref{exgreencurve}
is a variety in $\mathbb{C}^2$. It is the Zariski closure of the four
points on the topological boundary on the left in Figure~\ref{figgreencurve}.
The following proposition justifies the fact that the Zariski topology
is a topology.

\begin{proposition} Varieties satisfy the following properties:
\begin{longlist}[3.]
\item[1.] The empty set
$\varnothing= V(R)$ and the whole space $\mathbb C^n = V(\langle
0\rangle)$ are varieties.
\item[2.] The union of two varieties is a variety:
\[
V(I)\cup V(J)   =    V(I \cdot J )   =    V( I \cap J).
\]
\item[3.] The intersection of any family of varieties is a variety:
\[
\bigcap_{i\in\mathcal I} V(I_i)   =   V\bigl(
\langle I_i \dvtx  i\in\mathcal I\rangle\bigr).
\]
\end{longlist}
\end{proposition}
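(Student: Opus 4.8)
The plan is to verify the three items in turn, each reducing to elementary manipulations with the inclusion-reversing operator $V(\cdot)$ together with the identity $V(T) = V(\langle T\rangle)$ recorded just before the statement. For item (1), I would simply note that the constant polynomial $1$ lies in $R$ and vanishes nowhere, so $V(R) = \emptyset$, while the zero polynomial vanishes everywhere, giving $\mathbb{C}^n = V(\{0\}) = V(\langle 0\rangle)$.

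For item (2), the heart of the argument is the chain of inclusions $V(I)\cup V(J)\subseteq V(I\cap J)\subseteq V(I\cdot J)\subseteq V(I)\cup V(J)$, which forces all three sets to coincide. The first inclusion holds because $I\cap J\subseteq I$ and $I\cap J\subseteq J$, so a point of $V(I)$ or of $V(J)$ annihilates $I\cap J$. The second holds because every product $fg$ with $f\in I$ and $g\in J$ lies in both $I$ and $J$, hence $I\cdot J\subseteq I\cap J$, and $V(\cdot)$ reverses inclusions. The third is the only step with genuine content: if $P\in V(I\cdot J)$ but $P\notin V(I)$, choose $f\in I$ with $f(P)\neq 0$; then for every $g\in J$ we have $fg\in I\cdot J$, so $f(P)\,g(P)=0$, and since $\mathbb{C}$ has no zero divisors $g(P)=0$, whence $P\in V(J)$.

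For item (3), writing $J = \langle\,\bigcup_{i\in\mathcal{I}} I_i\,\rangle$, I would prove the two inclusions directly. Since $I_i\subseteq J$ for every $i$, inclusion-reversal gives $V(J)\subseteq V(I_i)$ for all $i$, hence $V(J)\subseteq\bigcap_{i\in\mathcal{I}} V(I_i)$. Conversely, if $P\in\bigcap_{i\in\mathcal{I}} V(I_i)$ then every element of every $I_i$ vanishes at $P$, and an arbitrary element of $J$ is a finite $R$-linear combination of such elements, so it too vanishes at $P$; thus $P\in V(J)$. Combining (1)--(3) shows that the varieties are exactly the closed sets of a topology on $\mathbb{C}^n$, which is the assertion.

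There is no serious obstacle here: the only point demanding care is the last inclusion in item (2), where one must remember that $\mathbb{C}$ is an integral domain, together with the routine bookkeeping that $V(\cdot)$ is inclusion-reversing throughout and that one may freely pass between a generating set and the ideal it generates.
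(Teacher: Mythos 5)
Your proof is correct and is the standard argument; note that the paper states this proposition without proof in its appendix (it is recalled as a textbook fact justifying that the Zariski topology is indeed a topology), so there is no in-paper argument to compare against. Your chain of inclusions $V(I)\cup V(J)\subseteq V(I\cap J)\subseteq V(I\cdot J)\subseteq V(I)\cup V(J)$ is exactly the right way to establish item (2), and you correctly isolate the one substantive step, namely that $\mathbb{C}$ is an integral domain, which is needed in the last inclusion.
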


Given any subset $X\subseteq\mathbb C^n$ (not necessarily a variety),
we define the \textit{ideal} of $X$ by
\[
I(X)    =    \bigl\{f\in R \dvtx  f(P) = 0\mbox{ for all }P\in X\bigr\}.
\]
Thus, $I(X)$ consists of all polynomials in $R$ that vanish on $X$.
The \textit{Zariski closure}
$\overline X$ of $X$ equals
\[
\overline X   =   V\bigl(I(X)\bigr). %
\]

\subsection{Irreducible decomposition}
A variety $X\subseteq\mathbb C^n$ is \textit{irreducible} if we cannot
write $X = X_1\cup X_2$, where $X_1, X_2\subsetneq X$ are strictly
smaller varieties. An ideal $I\subseteq R$ is \textit{prime} if $fg\in I$
implies $f\in I$ or $g\in I$.
For instance, $ I (\{ \pm\sqrt{2} \}) = \langle x^2-2 \rangle$ is a
prime ideal in $\mathbb{Q}[x]$.

\begin{proposition} The variety $X$ is irreducible if and only if
$I(X)$ is a prime ideal.
\end{proposition}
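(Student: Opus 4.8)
The plan is to prove the two implications separately, using in both directions the elementary identity $I(X_1 \cup X_2) = I(X_1) \cap I(X_2)$, which holds because a polynomial vanishes on $X_1 \cup X_2$ exactly when it vanishes on $X_1$ and on $X_2$.

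First I would establish the contrapositive of one direction: if $X$ is reducible, then $I(X)$ is not prime. Suppose $X = X_1 \cup X_2$ with $X_1, X_2$ varieties properly contained in $X$. Since $X_i \subsetneq X = \overline{X}$, passing to ideals reverses the inclusions, so $I(X) \subsetneq I(X_i)$ for $i = 1,2$, and I may pick $f_i \in I(X_i) \setminus I(X)$. The product $f_1 f_2$ vanishes on $X_1$ (as $f_1$ does) and on $X_2$ (as $f_2$ does), hence on all of $X$, so $f_1 f_2 \in I(X)$ while neither factor lies in $I(X)$; thus $I(X)$ fails to be prime.

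Conversely, I would show that if $X$ is irreducible then $I(X)$ is prime. Given $fg \in I(X)$, we have $X \subseteq V(fg) = V(f) \cup V(g)$, hence $X = (X \cap V(f)) \cup (X \cap V(g))$, a union of two subvarieties. Irreducibility forces one of them to be all of $X$, say $X = X \cap V(f)$, i.e. $X \subseteq V(f)$, i.e. $f \in I(X)$. Therefore $I(X)$ is prime.

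I do not expect a genuine obstacle here; the argument is purely formal, relying only on the order-reversing behavior of $V(\cdot)$ and $I(\cdot)$ already recorded in the appendix. The one point to keep track of is the degenerate case $X = \emptyset$: then $I(X) = R$, which by convention is not a prime ideal, matching the convention that $\emptyset$ is not an irreducible variety, so the equivalence is unaffected once these conventions are fixed. One should also note in passing that any proper variety $X \subsetneq \mathbb{C}^n$ still has $I(X) \neq R$, which is immediate since the constant polynomial $1$ vanishes nowhere.
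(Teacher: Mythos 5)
The paper states this proposition without proof in Appendix~\ref{app:basics}, which is a review of standard background material. Your argument is the standard textbook proof, and it is correct: the forward direction via the contrapositive, producing $f_1, f_2 \notin I(X)$ with $f_1 f_2 \in I(X)$ from a nontrivial decomposition $X = X_1 \cup X_2$ (using $V(I(X_i)) = X_i$ to get strict containment of ideals), and the reverse direction by writing $X = (X \cap V(f)) \cup (X \cap V(g))$ whenever $fg \in I(X)$ and invoking irreducibility. Your remark on the degenerate case $X = \emptyset$, $I(X) = R$ is also apt and worth noting since both conventions (the unit ideal is not prime; the empty set is not irreducible) must be in force for the equivalence to be clean.
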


An ideal is \textit{radical} if it is an intersection of prime ideals.
The assignment $X \mapsto I(X) $ is a bijection between
varieties in $\mathbb{C}^n$ and radical ideals in $R$.
Indeed, every variety $X$ satisfies $V(I(X)) = X$.

\begin{proposition} Every variety $ X$ can be written uniquely as
$X = X_1\cup X_2\cup\cdots\cup X_m$,
where $X_1, X_2, \ldots, X_m$ are irreducible and none of these $m$ components
contains any other. Moreover,
\[
I(X)   =    I(X_1) \cap I(X_2) \cap\cdots\cap
I(X_m) %
\]
is the unique
decomposition of the radical ideal $I(X)$ as an intersection of prime ideals.
\end{proposition}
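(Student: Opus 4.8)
The plan is to prove existence of the decomposition by Noetherian induction, prove its uniqueness by the standard irredundancy argument, and then transport both statements to ideals through the inclusion-reversing dictionary $X \mapsto I(X)$ recalled above.

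\emph{Existence.} If $X = \emptyset$ we take $m = 0$, so assume $X \neq \emptyset$. By Hilbert's Basis Theorem the ring $R$ is Noetherian, so it satisfies the ascending chain condition on ideals; since $X \mapsto I(X)$ is inclusion-reversing and injective on varieties (because $V(I(X)) = X$ for every variety $X$), there is no infinite strictly descending chain of varieties in $\mathbb{C}^n$. Consequently the family $\Sigma$ of those varieties that cannot be written as a finite union of irreducible varieties, if it were nonempty, would contain a minimal element $Y$. Such a $Y$ is not irreducible (otherwise $Y$ is already a one-term decomposition), so $Y = Y_1 \cup Y_2$ with $Y_1, Y_2 \subsetneq Y$ varieties. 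By minimality neither $Y_i$ lies in $\Sigma$, so each is a finite union of irreducibles, and therefore so is $Y$ — a contradiction. Hence $\Sigma = \emptyset$ and every variety is a finite union $X = X_1 \cup \cdots \cup X_m$ of irreducible varieties; discarding any $X_i$ contained in some other $X_j$ (finitely many steps) makes the decomposition irredundant.

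\emph{Uniqueness of the variety decomposition.} Suppose $X_1 \cup \cdots \cup X_m = Y_1 \cup \cdots \cup Y_k$ are two irredundant decompositions into irreducibles. Fixing $i$, the identity $X_i = \bigcup_j (X_i \cap Y_j)$ writes the irreducible set $X_i$ as a finite union of subvarieties, so $X_i = X_i \cap Y_j$ for some $j$, i.e. $X_i \subseteq Y_j$. The same reasoning gives $Y_j \subseteq X_{i'}$ for some $i'$, whence $X_i \subseteq X_{i'}$; irredundancy forces $i = i'$ and therefore $X_i = Y_j$. This pairs off the two lists bijectively, so $m = k$ and the components coincide up to order.

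\emph{The ideal-theoretic statement.} From $I(A \cup B) = I(A) \cap I(B)$ (a polynomial vanishes on $A \cup B$ exactly when it vanishes on each of $A$ and $B$) we obtain $I(X) = I(X_1) \cap \cdots \cap I(X_m)$. Each $X_i$ is irreducible, so each $I(X_i)$ is prime by the preceding proposition; thus $I(X)$ is an intersection of primes, hence radical, and the intersection is irredundant because the relations $V(I(X_i)) = X_i$ transport the pairwise incomparability of the $X_i$ to the $I(X_i)$. For uniqueness, given any irredundant presentation $I(X) = \mathfrak{q}_1 \cap \cdots \cap \mathfrak{q}_\ell$ with the $\mathfrak{q}_s$ prime, applying $V(-)$ yields $X = V(I(X)) = V(\mathfrak{q}_1) \cup \cdots \cup V(\mathfrak{q}_\ell)$, an irredundant decomposition of $X$ into irreducible varieties; by the previous step this list equals $\{X_1, \ldots, X_m\}$, and applying $I(-)$ back recovers $\{\mathfrak{q}_1, \ldots, \mathfrak{q}_\ell\} = \{I(X_1), \ldots, I(X_m)\}$. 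The only genuinely non-formal ingredient is the Noetherian property underpinning the existence half — everything else is bookkeeping within the variety–radical-ideal correspondence — so the point to get right is the descending chain condition on varieties deduced from Hilbert's Basis Theorem.
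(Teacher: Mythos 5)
The paper does not prove this proposition: it appears in the expository appendix on background algebraic geometry, where it is stated as a known fact and the reader is referred to Cox, Little and O'Shea \cite{CLO}, so there is no argument in the paper for you to be compared against. Your proof is the standard textbook argument --- Noetherian induction for existence of the decomposition, the irredundancy trick for uniqueness of the geometric components, and transport through the variety--radical-ideal dictionary for the ideal-theoretic half --- and it is correct as written.

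One small caveat about your closing remark. You single out the Noetherian property as ``the only genuinely non-formal ingredient,'' with everything else being bookkeeping within the variety--ideal correspondence. But in the final paragraph you use the identity $I(V(\mathfrak{q}_s)) = \mathfrak{q}_s$ for a prime ideal $\mathfrak{q}_s$ --- both to conclude that each $V(\mathfrak{q}_s)$ is irreducible (so that the previous uniqueness step applies) and to conclude that applying $I(-)$ recovers the list $\{\mathfrak{q}_1,\ldots,\mathfrak{q}_\ell\}$. That identity rests on Hilbert's Nullstellensatz, which is at least as deep as the basis theorem. Since the appendix has already asserted the bijection between varieties in $\mathbb{C}^n$ and radical ideals in $R$ in the sentences just above this proposition, you are entitled to invoke it without reproving it, but the claim that the ideal half is purely formal given the variety half understates what the dictionary itself is carrying.
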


For an explicit example, with $m=11$, we consider the ideal
(\ref{primarydecompositionmix22}) with the last
intersectand removed. In that example, the
EM fixed variety $X$ is decomposed into
$11$ irreducible components.

All ideals $I$ in $R$
can be written as intersections of \textit{primary ideals}.
Primary ideals are more general than
prime ideals, but they still define irreducible
varieties. A \textit{minimal prime} of an ideal $I$
is a prime ideal $J$ such that $V(J)$
is an irreducible component of $V(I)$.
See \cite{Solving}, Chapter~5,
for the basics on \textit{primary decomposition}.

\begin{definition}
Let $I \subseteq R$ be an ideal and $f \in R$ a polynomial. The
saturation of $I$ with respect to $f$ is the ideal
\[
\bigl(I\dvtx f^{\infty}\bigr)=\bigl\langle g \in R\dvtx  gf^k \in I
\mbox{ for some } k>0 \bigr\rangle. %
\]
\end{definition}

Saturating an ideal $I$ by a polynomial $f$ geometrically means that we
obtain a new ideal $J = (I\dvtx f^{\infty})$ whose variety $V(J)$ contains
all components of the variety $V(I)$ except for the ones on which $f$ vanishes.
For the more on these concepts from algebraic geometry we recommend
the text \cite{CLO}.

\subsection{Semialgebraic sets}
The discussion above also applies if
we consider the varieties $V(T)$ as subsets of $\mathbb R^n$ instead of
$\mathbb C^n$. This brings us to the world of
\textit{real algebraic geometry}. The field $\mathbb R$ of real numbers is
not algebraically
closed, it comes with a natural order, and it is fundamental for applications.
These features explain why real algebraic geometry is a subject in its
own right.
In addition to the polynomial equations we discussed so far,
we can now also introduce inequalities.

\begin{definition}
A \textit{basic semialgebraic set} $X\subseteq\mathbb R^n$ is a subset of
the form
\[
X   =   \bigl\{P\in\mathbb R^n\dvtx   f(P) = 0 \mbox{ for all }f\in T
\mbox{ and } g(P)\geq0 \mbox{ for all }g\in S \bigr\},
\]
where $S$ and $T$ are finite subsets of $R$.
A \textit{semialgebraic set} is a subset $X\subseteq\mathbb R^n$
that is obtained by a finite sequence of unions, intersections and
complements of basic semialgebraic sets.
\end{definition}

In other words, semialgebraic sets are described by finite
Boolean combinations of polynomial equalities and polynomial inequalities.
For basic semialgebraic sets, only conjunctions are allowed.
For example, the following two simple subsets of the plane are both
semialgebraic:
\begin{eqnarray*}
X &=& \bigl\{(x,y) \in\R^2\dvtx  x \geq0   \mbox{ and }   y \geq0 \bigr\}
\quad\mbox{and}
\\
Y &=& \bigl\{(x,y) \in\R^2\dvtx  x \geq0   \mbox{ or }
 y \geq0 \bigr\} .
\end{eqnarray*}
The set $X$ is basic semialgebraic, but $Y$ is not.
All convex polyhedra are semialgebraic.
A fundamental theorem due to Tarski states that the image of
a semialgebraic set under a polynomial map is semialgebraic.
Applying this to the map (\ref{eqmapphi}), we see that
the model $\mathcal{M}$ is semialgebraic.
The boundary of any semialgebraic set is again semialgebraic. The formulas
in Theorem~\ref{theoremsemialgebraicdescription}
and Corollary~\ref{cortopoboundary} make this explicit.
For more on semialgebraic sets and real algebraic geometry, see~\cite{BPR}.
\end{appendix}

\section*{Acknowledgements}
This work was carried out at the Max-Planck-Institut
f\"ur Mathematik in Bonn,
where all three authors were based during the Fall of 2013.

We thank Aldo Conca, Allen Knutson, Pierre-Jean Spaenlehauer and Matteo Varbaro
for helping us with this project.
Mathias Drton, Sonja Petrovi\'c, John Rhodes, Caroline Uhler and Piotr Zwiernik
provided comments on various drafts of the paper. We thank
Christopher Miller for pointing out an inaccuracy in Example~\ref{exgreencurve}.




\printaddresses
\end{document}